\newcommand{\R}{\mathbb{R}}
\newcommand{\SA}{\mathcal{S}}
\newcommand{\GA}{\mathcal{G}}
\newcommand{\linear}{\mathcal{L}}
\newcommand{\On}{\mathcal{O}}
\newcommand{\SO}{\mathrm{SO}}
\newcommand{\dg}{\mathrm{diag}}
\newcommand{\Dg}{\mathrm{d}}
\newcommand{\Rn}{\mathbb{R}^{n\times n}}
\newcommand{\RN}{\mathbb{R}^{N\times N}}
\newcommand{\deter}{\mathrm{det}}
\newcommand{\uni}{\mathcal{U}}
\newcommand{\tr}{\mathrm{tr}}
\title{Convexity and Star-shapedness of Real Linear Images of Special Orthogonal Orbits}
\author[1]{Pan-Shun Lau\thanks{panlau@hku.hk}}
\author[1]{Tuen-Wai Ng\thanks{ntw@maths.hku.hk}}
\author[1]{Nam-Kiu Tsing\thanks{nktsing@hku.hk}}
\affil[1]{\small Department of Mathematics, The University of Hong Kong, Pokfulam, Hong Kong}
\begin{document}
\theoremstyle{plain}% Theorem-like structures
\newtheorem{theorem}{Theorem}[section]
\newtheorem{corollary}[theorem]{Corollary}
\newtheorem{lemma}[theorem]{Lemma}
\newtheorem{proposition}[theorem]{Proposition}
\theoremstyle{definition}
\newtheorem{definition}{Definition}
\newtheorem{example}{Example}
\newtheorem{problem}{Problem}
\theoremstyle{remark}
\newtheorem{remark}{Remark}

\maketitle
\hrule
%abstract
\section*{\small Abstract}

Let $A\in\Rn$ and $\SO_n:=\{U\in\Rn:UU^t=I_n,\deter U>0\}$ be the set of $n\times n$ special orthogonal matrices. Define the (real) special orthogonal orbit of $A$ by
\[
O(A):=\{UAV:U,V\in\SO_n\}.
\] In this paper, we show that the linear image of $O(A)$ is star-shaped with respect to the origin for arbitrary linear maps $L:\Rn\to\R^\ell$ if $n\geq 2^{\ell-1}$. In particular, for linear maps $L:\Rn\to\R^2$ and when $A$ has distinct singular values, we study $B\in O(A)$ such that $L(B)$ is a boundary point of $L(O(A))$. This gives an alternative proof of a result by Li and Tam on the convexity of $L(O(A))$ for linear maps $L:\Rn\to\R^2$.\\[5pt]
{\footnotesize\emph{AMS Classification:}	15A04, 15A18.\\
\emph{Keywords:} linear transformation, special orthogonal orbits, convexity, star-shapedness}
\vspace{10pt}
\hrule

\section{Introduction}

Let $\On_n:=\{U\in\Rn:U^tU=UU^t=I_n\}$ and $\SO_n:=\{U\in \On_n: \deter U>0\}$ be the sets of $n\times n$ orthogonal matrices and $n\times n$ special orthogonal matrices respectively. For any $A\in\Rn$, we define the special orthogonal orbit of $A$ by
\[
O(A):=\{UAV:U,V\in \SO_n\}.
\]
It is clear that every element in $O(A)$ has the same collection of singular values and the same sign of determinant. In \cite{Thompson77}, Thompson studied the set of diagonals of the matrices in $O(A)$, and in \cite{Miranda}, Miranda and Thompson studied the characterizations of extreme values of $L(O(A))$ where $L:\R^{n\times n} \to \R$ is a linear map.

A set $S$ is said to be star-shaped with respect to $c\in S$ if for all $0\leq \alpha\leq 1$ and $x\in S$, $\alpha x+(1-\alpha)c\in S$. The $c$ is called a star center of $S$. In this paper, we shall study the star-shapedness of images of $O(A)$ under arbitrary linear maps $L:\Rn\to\R^\ell$. 

In fact the study of linear images of matrix orbits is a popular topic. If $A,C$ are $n\times n$ complex matrices and $\uni_n$ denotes the group of $n\times n$ (complex) unitary matrices, then the (classical) numerical range of $A$, denoted by $W(A)$, and the $C$-numerical range of $A$, denoted by $W_C(A)$, are simply the images of the unitary orbit of $A$, denoted by
\[\uni_n(A):=\{U^*AU:U\in\uni_n\},
\]under the linear maps
\[
X\longmapsto \tr(E_1 X) \;\;\;\mathrm{and}\;\;\; X\longmapsto \tr(CX)
\]respectively, where $E_1$ is the diagonal matrix with diagonal entries $1,0,...,0.$ It has been proved that $W(A)$ is always convex and $W_C(A)$ is always star-shaped (see \cite{CheungT}, \cite{Hausdorff}, \cite{Toeplitz}). Many results on the convexity and the star-shapedness of other generalized numerical ranges, which can be expressed as some particular linear images of $\uni_n(A)$, have been obtained (e.g., see \cite{CheungT}, \cite{HJ}, \cite{ckli}, \cite{LP1999}, \cite{LP2011}, \cite{Tsing81}, \cite{Westwick}).

Our paper is organized as follows. In Section 2, we study an inclusion relation of $L(O(A))$ with $L:\Rn\to\R^\ell$ and $n\geq 2^{\ell-1}$. We then apply the inclusion relation to show that $L(O(A))$ is star-shaped for general $A$ and $L:\Rn\to\R^\ell$ where $n\geq 2^{\ell-1}$. In particular, the star-shapedness holds for $L(O(A))$ with $L:\Rn\to\R^2$ and $n\geq 3$. Moreover, we shall extend our results to linear images of the following joint (real) orthogonal orbits,
\[
\begin{split}
&\boldsymbol{O}_1(A_1,...,A_m;G):=\{(A_1V,...,A_mV):V\in G\},\\
&\boldsymbol{O}_2(A_1,...,A_m;G):=\{(UA_1,...,UA_m):U\in G\},\\
&\boldsymbol{O}_3(A_1,...,A_m;G):=\{(UA_1V,...,UA_mV):U,V\in G \},
\end{split}
\]
where $G=\On_n$ or $\SO_n$. In Section 3, we study boundary points of $L(O(A))$ with $L:\Rn\to\R^2$. When $A\in\Rn$ has distinct singular values, we shall discuss the conditions on $U,V\in\SO_n$ under which $L(UAV)$ will be a boundary point of $L(O(A))$. Then we show that the intersection of $L(O(A))$ and any of its supporting lines is path-connected. Combining the result in Section 2, convexity of $L(O(A))$ for $L:\Rn\to\R^2$ then follows. This result was proved by Li and Tam \cite{LT} with a different approach. We shall also discuss the convexity of linear images of joint orthogonal orbits.

\section{Star-shapedness of linear image of $O(A)$}

%We begin with the following simple observation. 

%\begin{lemma}\label{lem_center}
%Let $A\in\Rn$ and $L:\Rn\rightarrow \R^\ell$ be linear. If $n$ is even and $L(O(A))$ is star-shaped, then the origin is a star center.
%\end{lemma}
%\begin{proof}
%Note that if $n$ is even, then $O(-A)=O(A)$. Hence if $x\in\R^\ell$ is a star center of $L(O(A))$, then so is$-x$. Since the set of star centers of a star-shaped set is convex, $\frac{1}{2}x+\frac{1}{2}(-x)=0$ is a star center.
%\end{proof}

The following is the first main theorem in this section.

\begin{theorem}\label{main_thm2}
Let $\ell\geq 3$. For any $A\in\Rn$ and any linear map $L:\Rn\rightarrow \R^\ell$ with $n\geq 2^{\ell-1}$, $L(O(A))$ is star-shaped with respect to the origin.
\end{theorem}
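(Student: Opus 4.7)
The plan is to apply the inclusion relation proved earlier in Section~2, which asserts (in its simplest form) that for $n\geq 2^{\ell-1}$ and any $B\in O(A)$ there exists $B'\in O(A)$ with $L(B') = \tfrac{1}{2}L(B)$, i.e.\ $\tfrac{1}{2}L(O(A)) \subseteq L(O(A))$. I then want to upgrade this scaling property to full star-shapedness by combining it with the compactness and path-connectedness of $L(O(A))$.

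First I would record the elementary topological features: $L(O(A))$ is compact and path-connected in $\R^\ell$, being the continuous image of the compact path-connected set $\SO_n \times \SO_n$ under $(U,V)\mapsto L(UAV)$. Fix any $B\in O(A)$. Iterating the scaling lemma produces $B_k\in O(A)$ with $L(B_k) = 2^{-k}L(B)$ for every $k\geq 0$, and closedness of $L(O(A))$ then forces $0\in L(O(A))$.

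Next I would tackle the full segment $\{tL(B):t\in[0,1]\}$. The natural route is to apply the inclusion lemma not merely to $L(B)$ but iteratively to intermediate points, producing for every dyadic rational $j/2^k\in[0,1]$ an element $B_{j,k}\in O(A)$ with $L(B_{j,k}) = (j/2^k)L(B)$. This forces one to read the inclusion lemma as an averaging/interpolation statement: given $B',B''\in O(A)$ whose linear images lie on the segment $[0,L(B)]$, produce an element of $O(A)$ whose image is their midpoint. Equivalently, one builds a continuous family $(U(t),V(t))\in \SO_n\times \SO_n$, $t\in[0,1]$, with $L(U(t)AV(t)) = tL(B)$, by concatenating parametric rotations in the mutually orthogonal two-planes that $n\geq 2^{\ell-1}$ guarantees inside $\SO_n$; density of dyadic rationals and closedness of $L(O(A))$ then close the argument.

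The main obstacle is precisely this last step: the scaling lemma directly supplies only the discrete sequence of halvings, whereas star-shapedness requires the \emph{entire continuous segment} in $L(O(A))$. I expect the hypothesis $n\geq 2^{\ell-1}$ to enter exactly here, providing enough independent degrees of freedom in $\SO_n\times \SO_n$ to route a curve in $O(A)$ whose $L$-image is the straight segment rather than merely some path hitting the dyadic rescalings. Verifying that one can interpolate linearly in all $\ell$ coordinates of $L$ simultaneously, rather than directionally, is where the exponential lower bound on $n$ appears essential.
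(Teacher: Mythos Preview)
Your proposal rests on a ``halving lemma'' of the form $\tfrac{1}{2}L(O(A))\subseteq L(O(A))$, but no such statement is established in Section~2. The inclusion relations actually proved there are Lemma~\ref{ellipsoid} (a family of ellipsoids $E(U,V)$ centered at the origin sits inside $\linear(P_1,\dots,P_\ell;\SO_N)$) and Lemma~\ref{exist_UV} (one of those ellipsoids can be made to degenerate). The paper's proof works by deforming $E(I_N,U)$ continuously to a degenerate ellipsoid and catching the point $\alpha\big(\tr(P'_1U),\dots,\tr(P'_\ell U)\big)^t$ on some intermediate ellipsoid; this handles every $\alpha\in[0,1]$ at once, with no iteration or density argument.

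Even granting a halving lemma, your argument has a genuine gap. Iterating $x\mapsto\tfrac12 x$ produces only the points $2^{-k}L(B)$, not $(j/2^k)L(B)$ for general $j$; to get $\tfrac34 L(B)$ you would need the midpoint of $L(B)$ and $\tfrac12 L(B)$, which a halving map does not provide. Your proposed fix---``given $B',B''\in O(A)$ with images on the segment, produce an element whose image is their midpoint''---is essentially convexity along the ray, i.e.\ the very thing to be proved. Compactness and path-connectedness do not rescue this: the logarithmic spiral
\[
S=\bigl\{\bigl(2^{-\theta/2\pi}\cos\theta,\;2^{-\theta/2\pi}\sin\theta\bigr):\theta\ge 0\bigr\}\cup\{0\}\subseteq\R^2
\]
is compact, path-connected, and satisfies $\tfrac12 S\subseteq S$, yet the segment from $0$ to $(1,0)$ is not contained in $S$. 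So the three properties you list do not force star-shapedness, and the missing ingredient is precisely the continuous ellipsoid family that the paper constructs.
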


We need some lemmas to prove Theorem~\ref{main_thm2}. Note that any linear map $L:\Rn \rightarrow \R^\ell$ can be expressed as
\[
L(X)=\big(\tr (P_1X),...,\tr(P_\ell X) \big)^t
\]
for some $P_1,...,P_\ell\in\Rn$. For convenience, for $M\subseteq\Rn$ and any $P_1,...,P_\ell\in\Rn$, we define
\[
\linear(P_1,...,P_\ell ; M):=\big\{\big(\tr (P_1X),...,\tr(P_\ell X) \big)^t:X\in M\big\}.
\]
For $A,P_1,...,P_\ell\in \Rn$, we let $\SA_A(P_1,...,P_\ell)$ be the set containing $(P'_1,...,P'_\ell)$ where $P'_1,...,P'_\ell \in \Rn$ and $\linear(P'_1,...,P'_\ell;O(A))\subseteq \linear(P_1,...,P_\ell;O(A))$. This definition is motivated by Cheung and Tsing \cite{CheungT}. Below are some basic properties of $\SA_A(P_1,...,P_\ell)$. 

\begin{lemma} \label{lemma3}
Let $A\in\Rn$. For any $P_1,...,P_\ell\in \Rn$, the followings hold:
\begin{enumerate}
\item[(a)] $\SA_{XAY}(UP_1V,...,UP_\ell V)=\SA_A(P_1,...,P_\ell)$ for any $U,V,X,Y\in \SO_n$;
\item[(b)] $(UP_1V,...,UP_\ell V)\in \SA_A(P_1,...,P_\ell)$, for any $U,V\in \SO_n$;
\item[(c)] $\SA_A(P'_1,...,P'_\ell)\subseteq \SA_A(P_1,...,P_\ell)$ for any $(P'_1,...,P'_\ell)\in \SA_A(P_1,...,P_\ell)$;
\item[(d)] $\linear(P_1,...,P_\ell;O(A)){=}\big\{\big(\tr (P'_1 A),...,\tr (P'_\ell A) \big)^t{:}(P'_1,...,P'_\ell){\in} ~\SA_A(P_1,...,P_\ell)\big\}$.
\end{enumerate}
\end{lemma}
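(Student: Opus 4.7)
The entire lemma rests on two elementary facts that I would isolate at the outset: (i) the trace cyclicity identity $\tr(UP_iV\cdot Z)=\tr(P_i\cdot VZU)$ for any $Z\in\Rn$ and any $U,V\in\SO_n$, and (ii) the $\SO_n$-bi-invariance of the orbit itself, namely $O(XAY)=O(A)$ for $X,Y\in\SO_n$ (since $U'XAYV'=(U'X)A(YV')$ and $U'X,YV'$ range over $\SO_n$ as $U',V'$ do). Everything else is bookkeeping built on these.

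The key preparatory step is to show that for any $U,V\in\SO_n$,
\[
\linear(UP_1V,\ldots,UP_\ell V;O(A))=\linear(P_1,\ldots,P_\ell;O(A)).
\]
I would prove this by writing an arbitrary element of the left-hand set as $\big(\tr(UP_iV\cdot U'AV')\big)_i=\big(\tr(P_i\cdot(VU')A(V'U))\big)_i$ via identity (i), and observing that $(VU',V'U)$ sweeps out $\SO_n\times\SO_n$ as $(U',V')$ does. With this identity in hand, part (b) is immediate: $\linear(UP_1V,\ldots,UP_\ell V;O(A))$ literally equals $\linear(P_1,\ldots,P_\ell;O(A))$, hence is certainly contained in it. Part (a) then follows by combining the displayed identity with fact (ii): the defining inclusion for $\SA_{XAY}(UP_1V,\ldots,UP_\ell V)$ is $\linear(P_1'',\ldots,P_\ell'';O(XAY))\subseteq\linear(UP_1V,\ldots,UP_\ell V;O(XAY))$, which rewrites as $\linear(P_1'',\ldots,P_\ell'';O(A))\subseteq\linear(P_1,\ldots,P_\ell;O(A))$, exactly the definition of $\SA_A(P_1,\ldots,P_\ell)$.

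Part (c) is a one-line consequence of the transitivity of set inclusion applied to the chain $\linear(P_1'',\ldots,P_\ell'';O(A))\subseteq\linear(P_1',\ldots,P_\ell';O(A))\subseteq\linear(P_1,\ldots,P_\ell;O(A))$. For part (d), I would argue both inclusions separately. For $\subseteq$, given $X=UAV\in O(A)$, trace cyclicity gives $\tr(P_iX)=\tr((VP_iU)A)$, and $(VP_1U,\ldots,VP_\ell U)\in\SA_A(P_1,\ldots,P_\ell)$ by part (b), so the tuple $\big(\tr(P_iX)\big)_i$ appears on the right-hand side. For $\supseteq$, any $(P_1',\ldots,P_\ell')\in\SA_A(P_1,\ldots,P_\ell)$ satisfies $\linear(P_1',\ldots,P_\ell';O(A))\subseteq\linear(P_1,\ldots,P_\ell;O(A))$ by definition, and the specific point $\big(\tr(P_i'A)\big)_i$ lies in the left-hand linear image (take $X=A\in O(A)$), hence in the right-hand one.

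I do not anticipate any genuine obstacle here; the proof is essentially a careful unpacking of the definition of $\SA_A$ together with the trace cyclicity identity. The only place where one must be a touch careful is in part (d), making sure that the map $X\mapsto(VP_1U,\ldots,VP_\ell U)$ used to represent $X=UAV$ indeed lands inside $\SA_A(P_1,\ldots,P_\ell)$, which is why (b) is proved before (d).
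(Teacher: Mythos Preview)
Your argument is correct and follows essentially the same route as the paper: the paper simply declares (a)--(c) trivial and notes that in (d) the inclusion $\subseteq$ follows from (b) while $\supseteq$ follows from the definition of $\SA_A$, which is exactly what you have spelled out in detail. Your explicit isolation of trace cyclicity and the bi-invariance $O(XAY)=O(A)$ just makes visible the computations the paper leaves implicit.
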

\begin{proof} (a), (b) and (c) are trivial. For (d), ``$\subseteq$'' follows from (b) and ``$\supseteq$'' follows from the definition of $\SA_A(P_1,...,P_\ell)$.
\end{proof}

\begin{lemma} \label{main_thm1_red} The following statements are equivalent (hence if one of these statements holds then the other three must also hold):
\begin{enumerate}
\item[(a)] $L(O(A))$ is star-shaped with respect to the origin for any $A\in\Rn$ and any linear map $L:\Rn\rightarrow \R^\ell$;
\item[(b)] $\SA_A(P_1,...,P_\ell)$ is star-shaped with respect to $(0_n,...,0_n)$ for any $A\in\Rn$ and any $P_1,...,P_\ell\in\Rn$, where $0_n$ is the $n\times n$ zero matrix;
\item[(c)] $L(\SO_n)$ is star-shaped with respect to the origin for any linear map $L:\Rn\rightarrow \R^\ell$;
\item[(d)] $\SA_{I_n}(P_1,...,P_\ell)$ is star-shaped with respect to $(0_n,...,0_n)$ for any $P_1,...,P_\ell\in\Rn$.
\end{enumerate}
\end{lemma}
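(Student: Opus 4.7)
The plan is to set up a short cycle of implications that includes all four statements. Specifically, I would prove the equivalence (a)~$\Leftrightarrow$~(b) directly, deduce (c)~$\Leftrightarrow$~(d) as the special case $A=I_n$ (using that $\SO_n$ is a group, so $O(I_n)=\SO_n$), observe that (a)~$\Rightarrow$~(c) is immediate by the same specialization, and close the loop with (c)~$\Rightarrow$~(a) via a decomposition of $O(A)$ into $\SO_n$-orbits.

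For (a)~$\Leftrightarrow$~(b), I would invoke Lemma~\ref{lemma3}(d), which identifies $L(O(A))$ as the image of $\SA_A(P_1,\ldots,P_\ell)$ under the linear evaluation map $T(P_1',\ldots,P_\ell')=(\tr(P_1'A),\ldots,\tr(P_\ell'A))^t$. Because $T$ is linear and sends the tuple of zero matrices to $0$, star-shapedness of $\SA_A(P_1,\ldots,P_\ell)$ at $(0_n,\ldots,0_n)$ transports directly to star-shapedness of $L(O(A))$ at $0$, giving (b)~$\Rightarrow$~(a). Conversely, for any $(P_1',\ldots,P_\ell')\in\SA_A(P_1,\ldots,P_\ell)$ and $\alpha\in[0,1]$, the chain
\[
\linear(\alpha P_1',\ldots,\alpha P_\ell';O(A))=\alpha\,\linear(P_1',\ldots,P_\ell';O(A))\subseteq \alpha L(O(A))\subseteq L(O(A))
\]
(where the last inclusion uses star-shapedness of $L(O(A))$ at $0$) shows $(\alpha P_1',\ldots,\alpha P_\ell')\in\SA_A(P_1,\ldots,P_\ell)$, and $(0_n,\ldots,0_n)\in\SA_A(P_1,\ldots,P_\ell)$ follows from $0\in L(O(A))$. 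The equivalence (c)~$\Leftrightarrow$~(d) and the implication (a)~$\Rightarrow$~(c) are then just the same argument specialised to $A=I_n$.

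The only step requiring a genuine idea is (c)~$\Rightarrow$~(a). Here I would write
\[
L(O(A))=\bigcup_{V\in\SO_n} L(\SO_n\cdot AV)
\]
and, for each fixed $V$, use the cyclic property of the trace to rewrite $\tr(P_iUAV)=\tr((AVP_i)U)$. This realises $L(\SO_n\cdot AV)$ as $M_V(\SO_n)$ for the linear map $M_V$ with parameters $AVP_1,\ldots,AVP_\ell$. Hypothesis (c) then gives that each $L(\SO_n\cdot AV)$ is star-shaped at the origin; since $0$ lies in every member of the union, the union itself is star-shaped at $0$. I expect this union argument to be the only real content of the lemma; the other implications are formal bookkeeping with the definition of $\SA_A$ and Lemma~\ref{lemma3}(d).
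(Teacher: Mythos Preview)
Your proposal is correct and follows essentially the same route as the paper: the equivalences (a)$\Leftrightarrow$(b) and (c)$\Leftrightarrow$(d) are handled formally via the definition of $\SA_A$, and the only substantive step (c)$\Rightarrow$(a) is done by decomposing $O(A)$ into $\SO_n$-cosets and applying (c) to each slice (the paper fixes $U$ and uses the precomposed map $L_{UA}(X)=L(UAX)$, while you fix $V$ and invoke the cyclic trace, but these are the same idea). The only cosmetic difference is that for (b)$\Rightarrow$(a) the paper cites Lemma~\ref{lemma3}(b) directly rather than passing through Lemma~\ref{lemma3}(d).
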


\begin{proof}
((a)$\Rightarrow$(b)) For any $(P'_1,...,P'_\ell)\in \SA_A(P_1,...,P_\ell)$, $U,V\in\SO_n$ and $0\leq \alpha\leq 1$, we have
\[
\left(\tr(\alpha P'_1UAV),...,\tr(\alpha P'_\ell UAV)\right)^t \in \linear(P'_1,...,P'_\ell;O(A))\subseteq \linear(P_1,...,P_\ell;O(A)).
\]
Hence $\alpha (P'_1,...,P'_\ell)\in \SA_A(P_1,...,P_\ell)$.

((b)$\Rightarrow$(a)) Apply Lemma~\ref{lemma3} (b).

((a)$\Rightarrow$(c)) If we take $A=I_n$, then $O(A)=\SO_n$.

((c)$\Rightarrow$(a)) Let $L:\Rn\rightarrow \R^\ell$ be linear and $A\in\Rn$. For any $U\in\SO_n$, define linear map $L_{UA}:\Rn\rightarrow \R^\ell$ by
\[
L_{UA}(X)=L(UAX).
\]
For any $U,V\in\SO_n$ and $0\leq\alpha\leq 1$, since $L_{UA}(\SO_n)$ is star-shaped with respect to the origin, there exists $V'\in\SO_n$ such that
\[
\alpha L(UAV)= \alpha L_{UA}(V) = L_{UA}(V')=L(UAV')\in L(O(A)).
\]

((c)$\Leftrightarrow$(d)) Apply similar arguments as those in (a)$\Leftrightarrow$(b).
\end{proof}

To prove Theorem~\ref{main_thm2}, we apply Lemma~\ref{main_thm1_red} and show the star-shapedness of $\SA_{I_n}(P_1,...,P_\ell)$ for any $P_1,...,P_\ell\in\Rn$ with $n\geq 2^{\ell-1}$. For simplicity, we denote $\SA_{I_n}(P_1,...,P_\ell)$ by $\SA(P_1,...,P_\ell)$. In fact, by the following lemma, we may focus only on the case of $n=2^{\ell-1}$.

%%%%%%%%%%%%%%%%%%%%%%%%%%%%%%%%%%%%%%%%%%%%%%%%%%%%%%%%%%%%%%%%%%%%%%%%%%%%%%%%%%%%%%%%%%%%%
%% Lemma for reduces the case of general to the smallest dimension to have star-shapedness %%
%%%%%%%%%%%%%%%%%%%%%%%%%%%%%%%%%%%%%%%%%%%%%%%%%%%%%%%%%%%%%%%%%%%%%%%%%%%%%%%%%%%%%%%%%%%%%

\begin{lemma} \label{lemma5}
If $\SA(\hat{P}_1,...,\hat{P}_\ell)$ is star-shaped with respect to the origin for all $\hat{P}_1,...,\hat{P}_\ell\in\Rn$, then for all $m> n$ and for all $P_1,...,P_\ell\in\R^{m\times m}$, $\SA(P_1,...,P_\ell)$ is star-shaped with respect to the origin.
\end{lemma}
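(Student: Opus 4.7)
The plan is to translate both the hypothesis and the conclusion into the ``linear image of $\SO$'' picture via Lemma~\ref{main_thm1_red}, and then to invoke the size-$n$ hypothesis on a carefully chosen $\SO_n$-parametrized family sitting inside $\SO_m$.

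By Lemma~\ref{main_thm1_red}~(c)$\Leftrightarrow$(d) applied at both sizes $n$ and $m$, the hypothesis reformulates as: \emph{for every linear map $L':\Rn\to\R^\ell$, the set $L'(\SO_n)$ is star-shaped with respect to the origin}, and the conclusion becomes the analogous statement with $n$ replaced by $m$. Writing a generic linear map $L:\R^{m\times m}\to\R^\ell$ as $L(X)=(\tr(P_1X),\ldots,\tr(P_\ell X))^t$, it therefore suffices, given $U\in\SO_m$ and $\alpha\in[0,1]$, to produce some $V\in\SO_m$ with $L(V)=\alpha L(U)$.

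For the core construction, fix $X\in\SO_m$ and consider the $\SO_n$-parametrized family
\[
V_W \;:=\; U\cdot X\,\dg(I_{m-n},W)\,X^t \;\in\;\SO_m, \qquad W\in\SO_n,
\]
which satisfies $V_{I_n}=U$. Partitioning $X^tP_iUX = \begin{pmatrix}A_i^X & B_i^X \\ C_i^X & D_i^X\end{pmatrix}$ with $D_i^X\in\Rn$ in the lower-right corner, a direct trace computation yields $L(V_W) = c(X) + \hat L_X(W)$, where $c(X):=(\tr A_i^X)_i$ is constant in $W$ and $\hat L_X(W):=(\tr(D_i^X W))_i$ defines a linear map $\Rn\to\R^\ell$. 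By the reformulated hypothesis applied to $\hat L_X$, the set $\hat L_X(\SO_n)$ is star-shaped with respect to the origin, hence the translate $\{L(V_W):W\in\SO_n\}=c(X)+\hat L_X(\SO_n)$ is star-shaped with respect to $c(X)$ and contained in $L(\SO_m)$. Consequently the segment from $L(U)$ to $c(X)$ lies in $L(\SO_m)$ for every choice of $X$. A short identification gives $c(X)=(\tr(P_iUQ_X))_i$, where $Q_X:=X\,\dg(I_{m-n},0_n)\,X^t$ is a rank-$(m-n)$ orthogonal projection, and $Q_X$ ranges over \emph{all} such projections as $X$ runs through $\SO_m$.

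The main obstacle is then to promote this family of segments $[L(U),\,L(UQ)]\subseteq L(\SO_m)$ (indexed by rank-$(m-n)$ orthogonal projections $Q$) into the full assertion ``$\alpha L(U)\in L(\SO_m)$ for every $\alpha\in[0,1]$'', since in general no single $Q$ yields $L(UQ)=\alpha L(U)$ outright. My plan is to iterate: each endpoint $L(UQ)$ lies in $L(\SO_m)$ and so equals $L(V^{(1)})$ for some $V^{(1)}\in\SO_m$, at which the same construction produces a further segment in $L(\SO_m)$, and so on. Exploiting the dimensional freedom afforded by $m>n$ (so that the Grassmannian of rank-$(m-n)$ projections is positive-dimensional), one arranges the iteration so that its nested union covers the entire segment $[0,\,L(U)]$, thereby establishing star-shapedness of $L(\SO_m)$ with respect to the origin and, via the Lemma~\ref{main_thm1_red} equivalences, of $\SA(P_1,\ldots,P_\ell)$.
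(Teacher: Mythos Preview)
Your construction up to the point where you obtain, for every $U\in\SO_m$ and every $X\in\SO_m$, the inclusion $[L(U),\,c_U(X)]\subseteq L(\SO_m)$ is correct and is essentially the same computation as the paper's first step. The gap is in your iteration. You propose to walk pointwise: from $L(U)$ to $L(V^{(1)})=c_U(X)$, then from $L(V^{(1)})$ to some $c_{V^{(1)}}(X')$, and so on, asserting that ``one arranges the iteration so that its nested union covers the entire segment $[0,\,L(U)]$''. But you give no scheme for choosing the projections, no reason the successive segments stay on the ray through $0$ and $L(U)$, and no argument that the process reaches (or even converges to) $0$. The new endpoints $c_{V^{(1)}}(X')=(\tr(P_iV^{(1)}Q_{X'}))_i$ depend on the actual matrix $V^{(1)}$, not just on its image $L(V^{(1)})$, so the ``dimensional freedom'' of the Grassmannian does not by itself control where these endpoints land.

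The paper avoids this by iterating at the level of $\SA$ rather than at the level of individual points, and your own computation already contains what is needed for that. Since your segment inclusion holds for \emph{every} $U\in\SO_m$, it says precisely that $\linear\big((\epsilon I_n\oplus I_k)P_1,\ldots,(\epsilon I_n\oplus I_k)P_\ell;\SO_m\big)\subseteq \linear(P_1,\ldots,P_\ell;\SO_m)$, i.e.\ multiplying any fixed $n$ rows of each $P_i'$ by $\epsilon$ keeps $(P_1',\ldots,P_\ell')$ inside $\SA(P_1,\ldots,P_\ell)$. Now iterate over all $n$-element subsets of rows: each row gets scaled the same number $N'$ of times, yielding $\epsilon^{N'}(P_1',\ldots,P_\ell')\in\SA(P_1,\ldots,P_\ell)$, and taking $\epsilon=\alpha^{1/N'}$ gives the conclusion. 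In short, the missing idea is to feed the output of one step back in as a new tuple in $\SA$ (a statement quantified over all $U$), not as a single new point in $L(\SO_m)$.
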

\begin{proof}
Let $m=n+k$ where $k$ is a positive integer. For any $(P'_1,...,P'_\ell)\in \SA(P_1,...,P_\ell)$, we write
\[P'_i=\begin{bmatrix}P'_{i1} & P'_{i2}\\ P'_{i3} & P'_{i4}\end{bmatrix},\] where $P'_{i1}\in\R^{n\times n}$ and $P'_{i4}\in\R^{k\times k}$.
We shall show that $\big(P'_1(\epsilon),...,P'_\ell(\epsilon)\big)\in \SA(P_1,...,P_\ell)$ where $P'_i(\epsilon)= (\epsilon I_n \oplus I_k) P'_i$ and $0\leq \epsilon\leq 1$. For any $U\in\SO_m$, we write 
\[U=\begin{bmatrix}U_1 & U_2\\ U_3 & U_4\end{bmatrix},\]
where $U_1\in\Rn$ and $U_4\in\R^{k\times k}$. Then for $0\leq \epsilon\leq 1$, by the hypothesis of the lemma, there exists $V\in\SO_n$ such that
\[\begin{split}
&~\big(\tr (P'_1(\epsilon)U),...,\tr (P'_\ell(\epsilon) U)\big)^t    \\
=&~ \epsilon\left(\tr (P'_{11}U_1+P'_{12}U_3),..., \tr (P'_{\ell 1}U_1+P'_{\ell 2}U_3)\right)^t\\
																				&\hspace{65pt}  +\left(\tr (P'_{13}U_2+P'_{14}U_4),...,\tr (P'_{\ell 3}U_2+P'_{\ell 4}U_4)\right)^t \\
=&~\Big(\tr \big[(P'_{11}U_1+P'_{12}U_3)V\big],...,\tr \big[(P'_{\ell 1}U_1+P'_{\ell 2}U_3)V\big]\Big)^t\\
																				&\hspace{65pt} +\Big(\tr (P'_{13}U_2+P'_{14}U_4),...,\tr (P'_{\ell 3}U_2+P'_{\ell 4}U_4)\Big)^t \\
=&~\Big(\tr \big[P'_1U(V\oplus I_k)\big],...,\tr \big[P'_\ell U(V\oplus I_k)\big]\Big)^t\\
\in &~ \linear(P'_1,...,P'_\ell;\SO_m)\\
\subseteq &~ \linear(P_1,...,P_\ell; \SO_m).
\end{split}\]
Since this holds for all $U\in\SO_m$, we have $\big(P'_1(\epsilon),...,P'_\ell(\epsilon)\big)\in\SA(P_1,...,P_\ell)$. Note that the preceding result also holds if we multiply arbitrary $n$ rows of $P'_i$ by $0\leq \epsilon \leq 1$. We re-apply the result by considering all $n$-combinations of rows to obtain $\epsilon^N(P'_1,...,P'_\ell)\in \SA(P_1,...,P_\ell)$, where $N=\dfrac{m!}{n!k!}$. For any $0\leq \alpha \leq 1$, we put $\epsilon=\sqrt[N]{\alpha}$ to obtain $\alpha (P'_1,...,P'_\ell)\in \SA(P_1,...,P_\ell)$.\end{proof}
%%%%%%%%%%%%%%%%%%%%%%%%%%%%%%%%%%%%%%%%%%%%%%%%%%%%%%%%%%%%%%%%%%%%%%%%%%%%%%%%%%%%%%%%%%%%%%%%%%%%%%%%%%%%%%%
%end%%
%%%%%%%%%%%%%%%%%%%%%%%%%%%%%%%%%%%%%%%%%%%%%%%%%%%%%%%%%%%%%%%%%%%%%%%%%%%%%%%%%%%%%%%%%%%%%%%%%%%%%%%%%%%%%%%
We now consider the following recursively defined matrices. Let
\[
R(\theta_1)=\begin{bmatrix}\cos \theta_1 & \sin\theta_1\\ -\sin\theta_1 &\cos\theta_1\end{bmatrix}
\]
and
\[
R(\theta_1,...,\theta_{k})= \begin{bmatrix}\cos \theta_{k} I_N& \sin\theta_{k}R(\theta_1,...,\theta_{k-1}) \\ -\sin\theta_{k} R(\theta_1,...,\theta_{k-1})^t &\cos\theta_{k}I_N\end{bmatrix}
\] 
where $N=2^{k-1}$. Note that $R(\theta_1,...,\theta_k)\in\SO_{2^{k}}$. 

%%%%%%%%%%%%%%%%%%%%%%%%%%%%%%%%%%%%%%%%%%%%%%%%%%%%%%%%%%%%%%%%%%%%%%%%%%%%%%%%%%%%%%%%%%%%%%%%%%%%%%%%%%%%%%%
%%%ellipsoid when apply R(\theta_1,...,\theta_\ell) on the right hand side%%%%%%%%%%%%%%%%%%%%%%%%%%%%%%%%%%%%%
%%%%%%%%%%%%%%%%%%%%%%%%%%%%%%%%%%%%%%%%%%%%%%%%%%%%%%%%%%%%%%%%%%%%%%%%%%%%%%%%%%%%%%%%%%%%%%%%%%%%%%%%%%%%%%%
\begin{lemma} \label{ellipsoid}
Let $\ell\geq 2$ and $P_1,...,P_\ell\in \R^{N\times N}$ where $N=2^{\ell-1}$. Then for any $U,V\in\SO_N$, the set
\[
\begin{split}
& E(U,V):=\\
&\Big\{\Big(\tr\big(R(\theta_1,...,\theta_{\ell-1})UP_1V\big),...,\tr\big(R(\theta_1,...,\theta_{\ell-1})UP_\ell V\big)\Big)^t{:}
	\theta_1,...,\theta_{\ell-1}\in [0,2\pi]
\Big\}\end{split}
\]
is an ellipsoid in $\R^\ell$ centered at the origin and is a subset of $\linear(P_1,...,P_\ell;\SO_N)$.
\end{lemma}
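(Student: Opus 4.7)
The plan is to recognize each entry of $E(U,V)$ as a spherical-coordinate parameterization in $\R^\ell$, and then package the whole thing as the image of the unit sphere $S^{\ell-1}$ under an $\ell\times\ell$ linear map. Write $Q_i:=UP_iV$ for $i=1,\ldots,\ell$, so the coordinates of a point of $E(U,V)$ are $\tr(R(\theta_1,\ldots,\theta_{\ell-1})Q_i)$. The containment $E(U,V)\subseteq\linear(P_1,\ldots,P_\ell;\SO_N)$ is immediate because $R(\theta_1,\ldots,\theta_{\ell-1})U\in\SO_N$.

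The first step is a one-level trace recursion. Partitioning a $2^{k}\times 2^{k}$ matrix $Q$ into four $2^{k-1}\times 2^{k-1}$ blocks $Q_{ij}$, block multiplication together with the identity $\tr(R^{t}Q_{12})=\tr(RQ_{12}^{t})$ yields
\[
\tr\bigl(R(\theta_1,\ldots,\theta_k)Q\bigr)=\cos\theta_k\,\tr(Q)+\sin\theta_k\,\tr\bigl(R(\theta_1,\ldots,\theta_{k-1})\widetilde Q\bigr),
\]
where $\widetilde Q:=Q_{21}-Q_{12}^{t}$. Defining $Q^{(0)}:=Q$ and $Q^{(j+1)}:=(Q^{(j)})_{21}-((Q^{(j)})_{12})^{t}$, and iterating $\ell-1$ times, gives
\[
\tr\bigl(R(\theta_1,\ldots,\theta_{\ell-1})Q\bigr)=\sum_{j=0}^{\ell-1}w_j(\theta)\,\tr(Q^{(j)}),
\]
where $w_0(\theta)=\cos\theta_{\ell-1}$, $w_j(\theta)=\cos\theta_{\ell-1-j}\prod_{i=\ell-j}^{\ell-1}\sin\theta_i$ for $1\le j\le \ell-2$, and $w_{\ell-1}(\theta)=\prod_{i=1}^{\ell-1}\sin\theta_i$. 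Note that $Q^{(\ell-1)}$ is a scalar, equal to its own trace.

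Now $w(\theta)=(w_0(\theta),\ldots,w_{\ell-1}(\theta))^{t}$ is precisely the standard spherical-coordinate parameterization of $S^{\ell-1}\subseteq\R^\ell$. A short induction on $\ell$ using $\cos^{2}+\sin^{2}=1$ gives $\|w(\theta)\|_2=1$, and a second induction shows that as $\theta_1,\ldots,\theta_{\ell-1}$ range over $[0,2\pi]$, the image of $w$ is all of $S^{\ell-1}$. Applying the expansion to each $Q_i$ and stacking the $\ell$ coordinates,
\[
\bigl(\tr(R(\theta)Q_1),\ldots,\tr(R(\theta)Q_\ell)\bigr)^{t}=Mw(\theta),\qquad M_{ij}:=\tr\bigl(Q_i^{(j-1)}\bigr),
\]
so $E(U,V)=M\cdot S^{\ell-1}$ is the image of the unit sphere under an $\ell\times\ell$ linear map, which is a (possibly degenerate) ellipsoid centered at the origin.

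The main obstacle is the bookkeeping in the iterated recursion: one must verify carefully that the reduction $Q\mapsto Q_{21}-Q_{12}^{t}$ behaves correctly under each level of block partitioning, and that the pile-up of sine and cosine factors matches the standard spherical-coordinate expression. Once the trace identity is in hand, recognizing $w(\theta)$ as a parameterization of $S^{\ell-1}$ and invoking the fact that a linear image of a sphere is an ellipsoid closes the proof.
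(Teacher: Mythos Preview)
Your proof is correct and follows essentially the same route as the paper's: both establish the one-step trace recursion
\[
\tr\bigl(R(\theta_1,\ldots,\theta_k)Q\bigr)=\cos\theta_k\,\tr(Q)+\sin\theta_k\,\tr\bigl(R(\theta_1,\ldots,\theta_{k-1})(Q_{21}-Q_{12}^{t})\bigr),
\]
iterate it to express each coordinate as a linear combination of the spherical-coordinate functions $w_j(\theta)$, and conclude that $E(U,V)$ is the image of $S^{\ell-1}$ under an $\ell\times\ell$ matrix. Your write-up is somewhat more explicit than the paper's (you name the iterated reduction $Q^{(j)}$ and spell out that $w$ surjects onto $S^{\ell-1}$), but the argument is the same.
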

\begin{proof}
We first show that for any $A\in\R^{N\times N}$ where $N=2^{\ell-1}$,
\[
\tr\big(R(\theta_1,...,\theta_{\ell-1})A\big)=\begin{bmatrix}a_1&\cdots& a_\ell\end{bmatrix}\begin{bmatrix}\cos\theta_{\ell-1} \\ \sin\theta_{\ell-1}\cos\theta_{\ell-2}\\ \sin\theta_{\ell-1}\sin\theta_{\ell-2}\cos\theta_{\ell-3} \\ \vdots \\ \sin\theta_{\ell-1}\sin\theta_{\ell-2}\cdots\sin\theta_1\end{bmatrix}
\]
for some $a_1,...,a_\ell\in\R$ by induction on $\ell$. The case for $\ell=2$ is trivial. Now assume it is true for $\ell\leq m$ where $m\geq 2$ and consider $A\in\R^{2M\times 2M}$ where $M=2^{m-1}$. We write 
\[A=\begin{bmatrix}A_1 & A_2\\A_3&A_4\end{bmatrix}\]
where $A_i\in\R^{M\times M},i=1,...,4.$ Then
\[\tr\big(R(\theta_1,...,\theta_{m})A\big)=\cos\theta_{m}\tr(A_1+A_4)+\sin\theta_{m} \tr\big(R(\theta_1,...,\theta_{m-1})(A_3-A_2^t)\big).\] 
By induction assumption on $\tr\big(R(\theta_1,...,\theta_{m-1})(A_3-A_2^t)\big)$, $\tr\big(R(\theta_1,...,\theta_{m})A\big)$ is in the desired form. Hence we have
\[E(U,V)=\left\{T\begin{bmatrix}\cos\theta_{\ell-1} \\ \sin\theta_{\ell-1}\cos\theta_{\ell-2}\\ \sin\theta_{\ell-1}\sin\theta_{\ell-2}\cos\theta_{\ell-3} \\ \vdots \\ \sin\theta_{\ell-1}\sin\theta_{\ell-2}\cdots\sin\theta_1\end{bmatrix}:\theta_1,...,\theta_{\ell-1}\in [0,2\pi]\right\},\]
for some $T\in\R^{\ell \times \ell}$ and hence $E(U,V)$ is an ellipsoid in $\R^\ell$ centered at the origin. As $R(\theta_1,...,\theta_k)$ is a special orthogonal matrix, $E(U,V)\subseteq \linear(P_1,...,P_\ell;\SO_N)$. 
\end{proof}
%%%%%%%%%%%%%%%%%%%%%%%%%%%%%%%%%%%%%%%%%%%%%%%%%%%%%%%%%%%%%%%%%%%%%%%%%%%%%%%%%%%%%%%%%%%%%%%%%%%%%%%%%%%%%%%
%end%%
%%%%%%%%%%%%%%%%%%%%%%%%%%%%%%%%%%%%%%%%%%%%%%%%%%%%%%%%%%%%%%%%%%%%%%%%%%%%%%%%%%%%%%%%%%%%%%%%%%%%%%%%%%%%%%%

%%%%%%%%%%%%%%%%%%%%%%%%%%%%%%%%%%%%%%%%%%%%%%%%%%%%%%%%%%%%%%%%%%%%%%%%%%%%%%%%%%%%%%%%%%%%%%%%%%%%%%%%%%%%%%%
%existance of degenerated ellipsoid%%
%%%%%%%%%%%%%%%%%%%%%%%%%%%%%%%%%%%%%%%%%%%%%%%%%%%%%%%%%%%%%%%%%%%%%%%%%%%%%%%%%%%%%%%%%%%%%%%%%%%%%%%%%%%%%%%

\begin{lemma}\label{exist_UV}
Let $\ell\geq 3$. For any $P_1,...,P_\ell\in\R^{N\times N}$ where $N=2^{\ell-1}$, there exist $U,V\in\SO_N$ such that $E(U,V)$ defined in Lemma~\ref{ellipsoid} degenerates (i.e., $E(U,V)$ is contained in an affine hyperplane in $\R^\ell$).
\end{lemma}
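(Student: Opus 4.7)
The plan is to find $U, V \in \SO_N$ such that the $\ell \times \ell$ matrix $T(U, V)$ whose $i$-th row is the coefficient vector $(a_1(UP_iV), \ldots, a_\ell(UP_iV))$ from Lemma~\ref{ellipsoid} is singular. Indeed, since $v(\theta)$ parametrizes the unit sphere in $\R^\ell$, we have $E(U, V) = \{T(U, V) v(\theta) : \theta \in [0, 2\pi]^{\ell-1}\}$, and so $E(U, V)$ is contained in an affine hyperplane of $\R^\ell$ iff $\det T(U, V) = 0$. The function $\det T$ is polynomial, hence continuous, on the compact path-connected set $\SO_N \times \SO_N$. Because each entry of $T(U, V)$ is linear in the entries of $U$ and $N = 2^{\ell-1}$ is even (so $-I_N \in \SO_N$), we have $T(-U, V) = -T(U, V)$ and hence $\det T(-U, V) = (-1)^\ell \det T(U, V)$.

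For $\ell$ odd, the above gives a genuine sign flip $\det T(-U, V) = -\det T(U, V)$, so by the intermediate value theorem on the path-connected $\SO_N \times \SO_N$, $\det T$ vanishes at some $(U^*, V^*)$, yielding the required degenerate ellipsoid.

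For $\ell$ even the parity $U \mapsto -U$ leaves $\det T$ invariant, so a more delicate argument is needed. I would exploit the recursive block structure of Lemma~\ref{ellipsoid}: partitioning $A \in \R^{N \times N}$ into four $N/2 \times N/2$ blocks $A_1, A_2, A_3, A_4$, the first $\ell - 1$ coefficients $a_1(A), \ldots, a_{\ell-1}(A)$ depend only on $A_3 - A_2^t \in \R^{N/2 \times N/2}$. Applied to $A = UP_iV$, the first $\ell - 1$ columns of $T(U, V)$ furnish $\ell$ vectors in $\R^{\ell - 1}$, which are automatically linearly dependent. Laplace-expanding along the last column gives
\[
\det T(U, V) = \sum_{i=1}^{\ell} (-1)^{i+\ell} \tr(UP_iV) \det M^{(i)}(U, V),
\]
where $M^{(i)}$ is the $(\ell - 1) \times (\ell - 1)$ minor obtained by deleting the $i$-th row and $\ell$-th column. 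Equivalently, $\det T(U, V) = \pm \mu(U, V)^t \mathbf{tr}(U, V)$, where $\mu_i(U, V) = (-1)^{i+1} \det M^{(i)}(U, V)$ is a signed-minor vector annihilating the first $\ell - 1$ columns of $T$, and $\mathbf{tr}(U, V) = (\tr(UP_1V), \ldots, \tr(UP_\ell V))^t$. I would then pass to the connected double cover of $\SO_N \times \SO_N$ on which the (generically 1-dimensional) left-nullspace line bundle trivializes; there the lifted scalar $\mu^t \mathbf{tr}$ is globally continuous and changes sign under the lifted involution $U \mapsto -U$, because that involution preserves the nullspace line (since $M \mapsto -M$ preserves its kernel) while flipping each $\tr(UP_iV)$. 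IVT on the connected cover then produces a zero, which descends to $\SO_N \times \SO_N$.

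The main obstacle is the even-$\ell$ case: the straightforward parity trick does not flip the sign of $\det T$, and the topological lifting argument sketched above must be carried out carefully. In particular, one must separately handle the exceptional locus where the left nullspace jumps to dimension $\geq 2$ (which automatically gives $\det T = 0$ since one can then choose $\mu$ to additionally annihilate $\mathbf{tr}$), and verify that the nullspace line bundle admits a connected orienting double cover over the complementary open set, on which $\mu$ can be tracked globally and the IVT argument carried out.
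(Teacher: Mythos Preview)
Your parity argument for odd $\ell$ is correct and elegant: since $N=2^{\ell-1}$ is even we have $-I_N\in\SO_N$, and linearity of each $a_j(\cdot)$ gives $T(-U,V)=-T(U,V)$, so $\det T$ changes sign along any path from $(U,V)$ to $(-U,V)$ in the connected space $\SO_N\times\SO_N$.

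For even $\ell$, however, your sketch has a genuine gap. The unjustified step is the claim that the lifted involution \emph{fixes} the nullspace section $\mu$ rather than sending it to $-\mu$; preserving the nullspace \emph{line} only gives $\mu\mapsto\pm\mu$. In fact the cofactor vector you wrote down satisfies $\mu(-U,V)=(-1)^{\ell-1}\mu(U,V)$, since each $(\ell-1)\times(\ell-1)$ minor of $M$ picks up $(-1)^{\ell-1}$ under $M\mapsto -M$. For $\ell$ even this is $\mu\mapsto-\mu$, which exactly cancels the flip in $\mathbf{tr}$, recovering $\det T\mapsto\det T$ with no sign change. Passing to the orienting double cover does not help: the involution $U\mapsto -U$ then interchanges the two sheets (precisely because the cofactor section changes sign), so the globally defined $\mu^t\mathbf{tr}$ on the cover is again invariant. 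Your even-$\ell$ case therefore remains open.

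The paper sidesteps all of this with a one-line explicit construction, uniform in $\ell$: choose $U,V$ so that the entire first \emph{row} of $T(U,V)$ vanishes, i.e.\ the first coordinate of $E(U,V)$ is identically zero. From the recursion in Lemma~\ref{ellipsoid}, $a_1(A)=\tr A$ while $a_2(A),\ldots,a_\ell(A)$ depend only on $A_3-A_2^t$ (the off-diagonal $\tfrac{N}{2}\times\tfrac{N}{2}$ blocks). So it suffices to make $UP_1V$ block-diagonal in this $\tfrac{N}{2}\times\tfrac{N}{2}$ partition and of trace zero. Take $U',V'\in\SO_N$ with $U'P_1V'$ diagonal, then set $U=U'$ and $V=V'\bigl(\begin{smallmatrix}0&-1\\1&0\end{smallmatrix}\oplus\cdots\oplus\begin{smallmatrix}0&-1\\1&0\end{smallmatrix}\bigr)$. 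The resulting $UP_1V$ is a direct sum of $2\times2$ blocks with zero diagonal; since $\ell\geq3$ forces $\tfrac{N}{2}=2^{\ell-2}$ to be even, these $2\times2$ blocks sit inside the $\tfrac{N}{2}\times\tfrac{N}{2}$ diagonal blocks, and the trace is zero. This is simpler and parity-free; note it also explains why $\ell=2$ fails (then $\tfrac{N}{2}=1$ and the $2\times2$ block straddles the partition).

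A minor indexing slip: it is $a_1(A)=\tr A$ that is the ``trace'' coefficient, while $a_2,\ldots,a_\ell$ are the ones depending on $A_3-A_2^t$, not the reverse. This does not affect the substantive gap above.
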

\begin{proof}
From the proof of Lemma~\ref{ellipsoid}, we see that if there exist $U,V\in\SO_N$ such that 
\[
UP_1V=\begin{bmatrix}P^{(1)}_1 &P^{(1)}_2\\P^{(1)}_3 & P^{(1)}_4\end{bmatrix}
\]where $P^{(1)}_i\in\R^{\frac{N}{2}\times \frac{N}{2}}$, $i=1,...,4$, $\tr(P^{(1)}_1+P^{(1)}_4)=0$ and $P^{(1)}_2=P^{(1)}_3=0$, then the first coordinate of $E(U,V)$ is always $0$ and hence $E(U,V)$ degenerates. Let $U',V'\in\SO_N$ be such that $U'P_1V'=\dg(p_1,...,p_N)$. Then 
\[
U=U',\;\;\; V=V'\left(\begin{bmatrix}0&-1\\1&0\end{bmatrix}\oplus\cdots\oplus\begin{bmatrix}0&-1\\1&0\end{bmatrix}\right)
\] will give the desired $UP_1V$.
\end{proof}
%%%%%%%%%%%%%%%%%%%%%%%%%%%%%%%%%%%%%%%%%%%%%%%%%%%%%%%%%%%%%%%%%%%%%%%%%%%%%%%%%%%%%%%%%%%%%%%%%%%%%%%%%%%%%%%
%end%%

Note that, by considering $P_1=\begin{bmatrix} 1 &0\\ 0 &0\end{bmatrix}$ and $P_2=\begin{bmatrix} 0 &0\\ 1 &0\end{bmatrix}$, then for any $U,V\in\SO_2$, the ellipse $E(U,V)$ defined in Lemma~\ref{ellipsoid} is always non-degenerate. Hence Lemma~\ref{exist_UV} and Theorem~\ref{main_thm2} fail to hold for $\ell=2$. 

We are now ready to prove our first main result.
%%%%%%%%%%%%%%%%%%%%%%%%%%%%%%%%%%%%%%%%%%%%%%%%%%%%%%%%%%%%%%%%%%%%%%%%%%%%%%%%%%%%%%%%%%%%%%%%%%%%%%%%%%%%%%%

\begin{proof}[Proof of Theorem~\ref{main_thm2}]
By Lemma~\ref{main_thm1_red} and Lemma~\ref{lemma5}, it suffices to show that for any $P_1,...,P_\ell\in\RN$ with $N=2^{\ell-1}$, $\SA(P_1,...,P_\ell)$ is star-shaped with respect to $(0_N,...,0_N)$. Let $(P'_1,...,P'_\ell)\in\SA(P_1,...,P_\ell)$ and $0\leq \alpha\leq 1$. For any $U\in\SO_N$, we define $E(I_N,U)$ as in Lemma~\ref{ellipsoid}. If $\alpha\big(\tr(P'_1U),...,\tr(P'_1U)\big)^t \in E(I_N,U)$, then we have
\[
\alpha\big(\tr(P'_1U),...,\tr(P'_1U)\big)^t\in\linear(P'_1,...,P'_\ell;\SO_N)\subseteq\linear(P_1,...,P_\ell;\SO_N).
\]
Assume now $\alpha\big(\tr(P'_1U),...,\tr(P'_1U)\big)^t\notin E(I_N,U)$. As the center of $E(I_N,U)$ is the origin, we have $\alpha\big(\tr(P'_1U),...,\tr(P'_1U)\big)^t$ lies inside the ellipsoid $E(I_N,U)$. As $\SO_N\times \SO_N$ is path connected, consider a continuous function $f:[0,1]\to \SO_N\times \SO_N$ with $f(0)=(I_N,U)$ and $f(1)=(U',V')$ where $(U',V')$ are defined in Lemma~\ref{exist_UV}. Then by continuity of $f$, there exists $s\in[0,1]$ such that $\alpha\big(\tr(P'_1U),...,\tr(P'_1U)\big)^t\in E(f(s))\subseteq \linear(P'_1,...,P'_\ell;\SO_N)\subseteq \linear(P_1,...,P_\ell;\SO_N)$. As it is true for all $U\in\SO_N$, we have 
\[
\alpha(P'_1,...,P'_\ell)+(1-\alpha)(0_n,...,0_n)=\alpha(P'_1,...,P'_\ell)\in \SA(P_1,...,P_\ell).
\]
\end{proof}

In fact for $\ell=2$, we have the following theorem, the proof of which is given by Lemma~\ref{ellipse1} to Corollary~\ref{include2}.

\begin{theorem} \label{main_thm1}
Let $A\in\Rn$ and $L:\Rn\rightarrow \R^2$ be a linear map with $n\geq 3$. Then $L(O(A))$ is star-shaped with respect to the origin.
\end{theorem}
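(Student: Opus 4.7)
The plan is to follow the architecture of the proof of Theorem~\ref{main_thm2} as closely as possible, but since the degeneration mechanism of Lemma~\ref{exist_UV} fails for $\ell=2$, a new ingredient is needed. By Lemma~\ref{main_thm1_red} it suffices to show that $\SA(P_1, P_2)$ is star-shaped with respect to $(0_n, 0_n)$ for all $P_1, P_2 \in \Rn$ with $n \geq 3$, and by Lemma~\ref{lemma5} I may restrict attention to $n = 3$.

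The first step is to produce an analogue of Lemma~\ref{ellipsoid}: a family of ellipses $E(U, V) \subseteq \linear(P_1, P_2; \SO_3)$ parameterized by $(U, V) \in \SO_3 \times \SO_3$, each centered at the origin and passing through a distinguished point related to $\big(\tr(P_1 UV), \tr(P_2 UV)\big)^t$ at a distinguished parameter value. A plain embedding $R(\theta) \oplus [1] \in \SO_3$ creates an additive shift in $\tr(\cdot)$ coming from the identity block, so the construction must cancel this shift, presumably by composing rotations in different 2-planes (so that the trace of the unmoved part vanishes) or by conjugating $R(\theta)$ by an appropriately chosen fixed element of $\SO_3$ that is unavailable in $\SO_2$. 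This should be the content of Lemma~\ref{ellipse1}, with the containment $E(U, V) \subseteq \linear(P_1, P_2; \SO_3)$ recorded as Corollary~\ref{include2}.

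Once the ellipse family is available, I would replay the continuity argument of Theorem~\ref{main_thm2}: for $(P'_1, P'_2) \in \SA(P_1, P_2)$, $U \in \SO_3$ and $\alpha \in [0, 1]$, the ellipse $E(I_3, U)$ contains $\big(\tr(P'_1 U), \tr(P'_2 U)\big)^t$ on its boundary, so, since it is centered at the origin, the scaled point $\alpha\big(\tr(P'_1 U), \tr(P'_2 U)\big)^t$ lies in its closed interior. By continuously deforming $(I_3, U)$ in $\SO_3 \times \SO_3$ to some pair $(U_\ast, V_\ast)$ for which $E(U_\ast, V_\ast)$ is small enough (or sufficiently degenerate) that the scaled point lies outside its closed interior, continuity of the ellipse family produces an intermediate $(U_0, V_0)$ at which the scaled point lies on $E(U_0, V_0) \subseteq \linear(P_1, P_2; \SO_3)$, giving $\alpha(P'_1, P'_2) \in \SA(P_1, P_2)$.

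The main obstacle is the replacement for Lemma~\ref{exist_UV}: what plays the role of the degenerate ellipsoid when $\ell = 2$? The only new ingredient is the extra freedom afforded by $n = 3$ over $n = 2$, so one must choose $(U_\ast, V_\ast) \in \SO_3$ whose out-of-plane directions collapse the semi-axes of $E(U_\ast, V_\ast)$ — for instance, by forcing the relevant $2 \times 2$ blocks of $U_\ast P_i V_\ast$ to become antisymmetric or to have vanishing trace. Identifying such $(U_\ast, V_\ast)$ and arranging the deformation so that the continuity argument produces a valid intermediate ellipse is the crux of the proof.
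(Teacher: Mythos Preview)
Your plan has a real gap, and it stems from the assumption that the ellipses must be centered at the origin. You spend effort trying to ``cancel the additive shift'' coming from the identity block in $R(\theta)\oplus I_{n-2}$, but the paper does \emph{not} cancel it: it embraces it. Lemma~\ref{ellipse1} is precisely the observation that $T_\theta=R(\theta)\oplus I_{n-2}$ traces an ellipse $E(U)$ in $\linear(P,Q;\SO_n)$ whose \emph{center} is the shift $\big(\tr(P_{(3)}U^{(3)}),\tr(Q_{(3)}U^{(3)})\big)^t$, not the origin. The point $\big(\tr(PU),\tr(QU)\big)^t$ sits on this ellipse at $\theta=0$, and moving it toward the center corresponds to shrinking only the contribution of the first two rows of $P,Q$. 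That is the content of Lemma~\ref{include1}: if $P_\epsilon=(\epsilon I_2\oplus I_{n-2})P$ and $Q_\epsilon=(\epsilon I_2\oplus I_{n-2})Q$, then $(P_\epsilon,Q_\epsilon)\in\SA(P,Q)$. Corollary~\ref{include2} is the same statement for $\SA_A$, not the containment $E(U,V)\subseteq\linear(P_1,P_2;\SO_3)$ you predicted.

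The continuity-and-degeneration step you outline is indeed present (Lemma~\ref{lemma6} finds $U_0$ with $E(U_0)$ degenerate, using the extra room in $n\ge 3$), but it is used only to prove the partial scaling of Lemma~\ref{include1}, not to reach the origin directly. The final proof of Theorem~\ref{main_thm1} does \emph{not} reduce to $n=3$ via Lemma~\ref{lemma5}; instead it applies Lemma~\ref{include1} to every unordered pair of rows, obtaining $(\epsilon^N P',\epsilon^N Q')\in\SA(P',Q')$ with $N=\binom{n}{2}$, and sets $\epsilon=\alpha^{1/N}$. Your single-shot approach of deforming an origin-centered ellipse would require first constructing such ellipses inside $\linear(P_1,P_2;\SO_3)$ through an arbitrary image point, and you have not produced that construction; the paper's two-row-at-a-time trick is exactly the device that sidesteps this difficulty.
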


\begin{lemma}\label{ellipse1}
Let $n\geq 2$. For any $P,Q\in\mathbb{R}^{n\times n}$, $U\in \SO_n$, the locus of the point $\big(\tr(T_\theta PU),\tr(T_\theta QU)\big)^t$ where $T_\theta=R(\theta)\oplus I_{n-2} $ forms an ellipse $E(U)$ in $\mathbb{R}^2$ when $\theta$ runs through $[0,2\pi]$.
\end{lemma}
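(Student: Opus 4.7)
The plan is to reduce the lemma to the classical fact that an affine image of the unit circle is an ellipse (possibly degenerate), by expressing both trace coordinates as pure sinusoids in $\theta$ with the same frequency plus constants.

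First, I would block-partition. Set $M = PU$ and $N = QU$, and write
\[
M = \begin{bmatrix} M_1 & M_2 \\ M_3 & M_4 \end{bmatrix}, \qquad N = \begin{bmatrix} N_1 & N_2 \\ N_3 & N_4 \end{bmatrix},
\]
where $M_1, N_1 \in \R^{2\times 2}$ and $M_4, N_4 \in \R^{(n-2)\times(n-2)}$. Since $T_\theta = R(\theta) \oplus I_{n-2}$,
\[
T_\theta M = \begin{bmatrix} R(\theta) M_1 & R(\theta) M_2 \\ M_3 & M_4 \end{bmatrix},
\]
so $\tr(T_\theta M) = \tr(R(\theta) M_1) + \tr(M_4)$, and similarly for $N$. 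A direct expansion of $R(\theta) M_1$ with $M_1 = \begin{bmatrix} a & b \\ c & d \end{bmatrix}$ gives $\tr(R(\theta) M_1) = (a+d)\cos\theta + (c-b)\sin\theta$.

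Hence there exist scalars $\alpha_i, \beta_i, \gamma_i$ ($i=1,2$), depending on $P$, $Q$, $U$ but not on $\theta$, such that
\[
\begin{bmatrix} \tr(T_\theta PU) \\ \tr(T_\theta QU) \end{bmatrix} = \begin{bmatrix} \alpha_1 & \beta_1 \\ \alpha_2 & \beta_2 \end{bmatrix} \begin{bmatrix} \cos\theta \\ \sin\theta \end{bmatrix} + \begin{bmatrix} \gamma_1 \\ \gamma_2 \end{bmatrix}.
\]
As $\theta$ ranges over $[0, 2\pi]$, the vector $(\cos\theta, \sin\theta)^t$ traces the unit circle in $\R^2$, so the locus is the image of the unit circle under an affine map. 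This image is an ellipse centered at $(\gamma_1, \gamma_2)^t$ (possibly degenerating to a line segment or a single point when the $2\times 2$ matrix above is singular), which is exactly the content of the lemma.

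There is no genuine obstacle here; the only point requiring care is interpreting ``ellipse'' in the possibly-degenerate sense, consistent with how it will be used in the subsequent results. The computation is short and relies entirely on the block structure of $T_\theta$ and the elementary identity for $\tr(R(\theta) M_1)$.
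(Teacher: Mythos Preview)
Your proof is correct and follows essentially the same approach as the paper: both block-partition, compute $\tr(T_\theta PU)$ as a sinusoid in $\theta$ plus a constant, and conclude that the locus is an affine image of the unit circle. The only cosmetic difference is that you first set $M=PU$ and work with its $2\times 2$ block, whereas the paper keeps $P$ and $U$ separate using row and column vectors; the resulting formula and conclusion are identical.
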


\begin{proof}
We write
\[
P=\left[\begin{array}{c}
p_{(1)}\\ \hline	
p_{(2)}\\ \hline
P_{(3)}
\end{array}\right],\quad Q=\left[\begin{array}{c}
q_{(1)}\\ \hline	
q_{(2)}\\ \hline
Q_{(3)}
\end{array}\right] \quad \text{     and     } \quad
U=\left[\begin{array}{c|c|c}u^{(1)} & u^{(2)}& U^{(3)} \end{array} \right]
\]
where $p^t_{(1)},p^t_{(2)},q^t_{(1)},q^t_{(2)},u^{(1)},u^{(2)}\in\mathbb{R}^{n}$ and $P_{(3)}^t,Q^t_{(3)},U^{(3)}\in\R^{n\times (n-2)}$.  Direct computation shows
\[
\begin{split}
\tr(T_\theta PU)=\cos\theta( p_{(1)} u^{(1)} + p_{(2)} u^{(2)})  +  \sin\theta (p_{(2)}u^{(1)} - p_{(1)}u^{(2)}) + \text{tr}(P^t_{(3)}U^{(3)}).
\end{split}
\]
Similarly for $\tr(T_\theta QU)$. Hence
\[
\begin{bmatrix} \tr(T_\theta PU)\\ \tr(T_\theta QU)\end{bmatrix} = \begin{bmatrix} p_{(1)} u^{(1)} + p_{(2)} u^{(2)} &p_{(2)}u^{(1)} - p_{(1)}u^{(2)}\\ q_{(1)} u^{(1)} + q_{(2)} u^{(2)}&q_{(2)}u^{(1)} - q_{(1)}u^{(2)} \end{bmatrix}\begin{bmatrix} \cos\theta \\ \sin\theta \end{bmatrix}+ \begin{bmatrix}\tr (P_{(3)}U^{(3)}) \\	\tr (Q_{(3)}U^{(3)}) \end{bmatrix},
\]
the locus of which forms an ellipse (possibly degenerate) when $\theta$ runs through $[0,2\pi]$.
\end{proof}

\begin{lemma}\label{lemma6}
For any $P,Q\in \mathbb{R}^{n\times n}$ with $n\geq 3$, there exists $U_0\in \SO_n$ such that the ellipse $E(U_0)$ defined in Lemma~\ref{ellipse1} degenerates. 
\end{lemma}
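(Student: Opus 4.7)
The plan is to exhibit $U_0\in\SO_n$ for which the $2\times 2$ matrix
\[
M(U_0)=\begin{bmatrix} p_{(1)}u_0^{(1)}+p_{(2)}u_0^{(2)} & p_{(2)}u_0^{(1)}-p_{(1)}u_0^{(2)}\\ q_{(1)}u_0^{(1)}+q_{(2)}u_0^{(2)} & q_{(2)}u_0^{(1)}-q_{(1)}u_0^{(2)}\end{bmatrix}
\]
from Lemma~\ref{ellipse1} is singular, since this is exactly the condition for the ellipse $E(U_0)$ to degenerate. I aim for the stronger property that the \emph{first row} of $M(U_0)$ is identically zero; writing $\tilde P$ for the $2\times n$ matrix with rows $p_{(1)},p_{(2)}$ and $V=[u_0^{(1)}\mid u_0^{(2)}]$, this is equivalent to requiring that the top-left $2\times 2$ block $\tilde P V$ of $PU_0$ be both traceless and symmetric.

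Let $W=\mathrm{span}(p_{(1)},p_{(2)})\subseteq\R^n$. If $\dim W\leq 1$, or if $n\geq 4$, then $\dim W^\perp\geq 2$, and I may pick $u_0^{(1)},u_0^{(2)}$ to be any orthonormal pair in $W^\perp$; then $p_{(i)}\cdot u_0^{(j)}=0$ for all $i,j\in\{1,2\}$, so $\tilde P V=0$ automatically. I complete $\{u_0^{(1)},u_0^{(2)}\}$ to an orthonormal basis of $\R^n$, flipping the sign of the last vector if necessary, to obtain $U_0\in\SO_n$.

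The only remaining case is $n=3$ with $\dim W=2$, where $W^\perp$ is a single line $\R e_3'$; this is where the main work (and the main obstacle) lies. Decompose $u_0^{(i)}=v_i+t_ie_3'$ with $v_i\in W$ and $t_i\in\R$. Since $p_{(i)}\perp e_3'$, the traceless-symmetric requirement on $\tilde P V$ depends only on $v_1,v_2$ and reduces to the linear system $p_{(1)}\cdot v_1+p_{(2)}\cdot v_2=0$ and $p_{(2)}\cdot v_1-p_{(1)}\cdot v_2=0$ in $v_1,v_2\in W\cong\R^2$; since $p_{(1)},p_{(2)}$ are linearly independent its coefficient matrix has rank $2$, so its null space $N$ is $2$-dimensional. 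Orthonormality of $u_0^{(1)},u_0^{(2)}$ amounts to $\|v_i\|^2+t_i^2=1$ and $v_1\cdot v_2+t_1t_2=0$, which admit real $t_i$ iff $\|v_i\|\leq 1$ and $(v_1\cdot v_2)^2=(1-\|v_1\|^2)(1-\|v_2\|^2)$. Along any ray in $N$ parametrized by a scale $c$, this last condition becomes the quadratic $Ac^4-Bc^2+1=0$ in $c^2$, with $A=\|v_1\|^2\|v_2\|^2-(v_1\cdot v_2)^2\geq 0$ (Cauchy--Schwarz) and $B=\|v_1\|^2+\|v_2\|^2>0$; its discriminant $B^2-4A=(\|v_1\|^2-\|v_2\|^2)^2+4(v_1\cdot v_2)^2$ is nonnegative, and an elementary estimate confirms that the smaller positive root $c_-^2=2/(B+\sqrt{B^2-4A})$ satisfies $c_-^2\|v_i\|^2\leq 1$ for $i=1,2$. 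This yields orthonormal $u_0^{(1)},u_0^{(2)}\in\R^3$, which I complete by $u_0^{(3)}=u_0^{(1)}\times u_0^{(2)}$ to get $U_0\in\SO_3$. The heavy lifting lies in reconciling the algebraic degeneracy conditions with the unit-norm and orthogonality constraints inside the one-dimensional $W^\perp$, which is precisely what the discriminant identity and the norm bound on $c_-^2$ enable.
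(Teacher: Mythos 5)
Your proof is correct, and it reaches the same intermediate target as the paper --- choosing $U_0$ so that the first row of the $2\times 2$ coefficient matrix vanishes, which you phrase as $\tilde PV$ being traceless and symmetric --- but it gets there by a genuinely different route. The paper normalizes $p_{(1)},p_{(2)}$ (after a harmless swap one may assume $\|p_{(1)}\|\geq\|p_{(2)}\|$, then scale both by $\|p_{(1)}\|^{-1}$ and rotate, giving $p_{(1)}=e_1$, $p_{(2)}=(a,b,0,\dots,0)$ with $0\leq b\leq 1$), imposes the stronger requirement $p_{(1)}\cdot u^{(2)}=p_{(2)}\cdot u^{(1)}=0$ in addition to tracelessness, treats $a=0$ or $b=0$ by hand, and in the remaining case runs an intermediate value theorem argument on a one-parameter family $(w_\theta,v_\theta)$, $\theta\in[0,\pi]$. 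You instead first observe that when $\dim W\leq 1$ or $n\geq 4$ one can simply take $u^{(1)},u^{(2)}$ orthonormal inside $W^\perp$, killing the entire block $\tilde PV$, which isolates $n=3$ with $\dim W=2$ as the only nontrivial case; there you solve the rank-$2$ linear system for $(v_1,v_2)\in N$ (note nonzero solutions automatically have $v_1\neq 0\neq v_2$, which guarantees $B>0$), scale along a ray, and verify directly that the smaller root $c_-^2=2/(B+\sqrt{B^2-4A})$ of $Ac^4-Bc^2+1=0$ obeys $c_-^2\|v_i^0\|^2\leq 1$ via the inequality $\|v_1^0\|^2-\|v_2^0\|^2\leq\sqrt{(\|v_1^0\|^2-\|v_2^0\|^2)^2+4(v_1^0\cdot v_2^0)^2}$. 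Your argument is constructive and algebraic where the paper's is topological; the case split makes the $n\geq 4$ picture transparent, at the price of a somewhat heavier computation in the residual $n=3$ case. Both are valid proofs.
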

\begin{proof}
Note that $E(U)$ degenerates if we find orthonormal vectors $u^{(1)},u^{(2)}\in \R^n$ such that the matrix
\[
\begin{bmatrix} p_{(1)} u^{(1)} + p_{(2)} u^{(2)} &p_{(2)}u^{(1)} - p_{(1)}u^{(2)}\\ q_{(1)} u^{(1)} + q_{(2)} u^{(2)}&q_{(2)}u^{(1)} - q_{(1)}u^{(2)} \end{bmatrix}
\] 
is singular. We will show that for any given $p_1,p_2\in \R^n$, there exist orthonormal vectors $u_1,u_2$ such that $p_1^tu_2 = p_2^tu_1 = p_1^tu_1+p_2^tu_2 = 0$. By scaling and rotating, we assume without loss of generality that $p_1=(1,0,...,0)^t$ and $p_2=(a,b,0,...,0)^t$ where $a,b\in\mathbb{R}$ and $0\leq b\leq 1$. If $a=0$ or $b= 0$, we can take $u_1=(-b,0,\sqrt{1-b^2},0,...,0)^t$ and $u_2=(0,1,0,...,0)^t$. Now, assume that $a\neq 0$ and $0<b\leq 1$. For $\theta\in[0,\pi]$ consider unit vectors
\[
v_\theta=\begin{bmatrix}
0\\
\cos\theta\\
\sin\theta\\
0\\
\vdots\\
0
\end{bmatrix} \;\;\text{  and  } \;\; w_\theta=\frac{1}{\sqrt{b^2\sin^2\theta+a^2}}\begin{bmatrix}
-b \sin\theta\\
a \sin\theta\\
-a \cos\theta\\
0\\
\vdots\\
0
\end{bmatrix}.
\]
Clearly, $p_1^tv_\theta=p_2^tw_\theta=v_\theta^tw_\theta=0$. Define $f(\theta)=p_1w_\theta+p_2v_\theta= b\cos\theta - \dfrac{b\sin\theta}{\sqrt{b^2\sin^2\theta+a^2}}$ which is a continuous function with $f(0)=b$ and $f(\pi)=-b$. Hence there exists $\theta'\in[0,\pi]$ such that $f(\theta')=0$. Then we take $u_2=v_{\theta'}$ and $u_1=w_{\theta'}$.
\end{proof}

%%%%%%%%%%%%%%%%%%%%%%%%%%%%%%%%%%%%%%%%%%%%%%%%%%%%%%%%%%%%%%%%%%%%%%%%%%%%%
\begin{lemma}\label{include1}
For $P,Q\in\mathbb{R}^{n\times n}$, $n\geq 3$ and $0\leq \epsilon\leq 1$ we define
\[
P_\epsilon=\begin{bmatrix}\epsilon I_2 & \\ & I_{n-2}\end{bmatrix}P\quad \text{ and } \quad Q_\epsilon=\begin{bmatrix}\epsilon I_2 & \\ & I_{n-2}\end{bmatrix}Q.
\]
Then $(P_\epsilon,Q_\epsilon)\in \SA(P,Q)$.
\end{lemma}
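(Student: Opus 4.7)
The plan is to fix an arbitrary $U\in\SO_n$ and show that the point $y_U:=\big(\tr(P_\epsilon U),\tr(Q_\epsilon U)\big)^t$ belongs to $\linear(P,Q;\SO_n)$; since $U$ is arbitrary this gives $\linear(P_\epsilon,Q_\epsilon;\SO_n)\subseteq\linear(P,Q;\SO_n)$, i.e.\ $(P_\epsilon,Q_\epsilon)\in\SA(P,Q)$.

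First I would compute $y_U$ explicitly. Since $\dg(\epsilon,\epsilon,1,\dots,1)$ rescales only the first two rows of $P$, the block notation of Lemma~\ref{ellipse1} gives
\[
\tr(P_\epsilon U)=\epsilon\bigl(p_{(1)}u^{(1)}+p_{(2)}u^{(2)}\bigr)+\tr(P_{(3)}U^{(3)}),
\]
and similarly for $Q_\epsilon$. Comparing with the parametrization of $E(U)$ derived in the proof of Lemma~\ref{ellipse1}, this says exactly
\[
y_U=\epsilon\, x_U+(1-\epsilon)\, c_U,
\]
where $x_U:=(\tr(PU),\tr(QU))^t$ is the point of $E(U)$ at $\theta=0$ and $c_U:=(\tr(P_{(3)}U^{(3)}),\tr(Q_{(3)}U^{(3)}))^t$ is the center of $E(U)$. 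Thus $y_U$ lies on the segment from $c_U$ to $x_U$, and so $y_U\in\co(E(U))$.

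To strengthen $y_U\in\co(E(U))$ to $y_U\in\linear(P,Q;\SO_n)$, I would imitate the degenerate-ellipse sweep used in the proof of Theorem~\ref{main_thm2}. By Lemma~\ref{lemma6} pick $U_0\in\SO_n$ with $E(U_0)$ degenerate, so $\co(E(U_0))=E(U_0)$. Since $\SO_n$ is path-connected, fix a continuous path $\gamma:[0,1]\to\SO_n$ with $\gamma(0)=U$ and $\gamma(1)=U_0$; the closed sets $\co(E(\gamma(t)))$ then vary continuously in the Hausdorff metric. Set
\[
t^*:=\sup\{t\in[0,1]:y_U\in\co(E(\gamma(t)))\},
\]
which is attained and at least $0$. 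If $t^*=1$ then $y_U\in\co(E(U_0))=E(U_0)\subseteq\linear(P,Q;\SO_n)$. If $t^*<1$, then $y_U$ cannot sit in the $\R^2$-interior of $\co(E(\gamma(t^*)))$ (otherwise it would persist for $t$ slightly above $t^*$, contradicting the supremum), so $y_U$ lies on the $\R^2$-boundary of $\co(E(\gamma(t^*)))$. That boundary equals the curve $E(\gamma(t^*))$ in the non-degenerate case and is contained in $E(\gamma(t^*))$ in the degenerate case, so in either subcase $y_U\in E(\gamma(t^*))\subseteq\linear(P,Q;\SO_n)$.

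The main obstacle is making the continuity step rigorous: one has to confirm that the set $\{t:y_U\in\co(E(\gamma(t)))\}$ is closed in $[0,1]$, and that in the degenerate case the $\R^2$-boundary of the collapsed $\co(E(\gamma(t^*)))$ is swept out by the parametrization $E(\gamma(t^*))$. Both are standard but deserve explicit verification; the rest of the proof is routine manipulation of traces together with Lemmas~\ref{ellipse1} and~\ref{lemma6}.
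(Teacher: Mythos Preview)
Your proposal is correct and follows essentially the same strategy as the paper's proof: both arguments place $y_U$ inside (the convex hull of) the ellipse $E(U)$, then move along a continuous path in $\SO_n$ towards a degenerate ellipse $E(U_0)$ and catch $y_U$ on some intermediate $E(\gamma(t))$ by continuity. Your version is in fact a bit more explicit than the paper's, in that you verify $y_U=\epsilon\, x_U+(1-\epsilon)\, c_U$ and isolate the two routine continuity checks that the paper leaves implicit.
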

\begin{proof}
For any $U\in \SO_n$, consider the ellipse $E(U)$ defined in Lemma~\ref{ellipse1}. If $\big(\text{tr}(P_\epsilon U),\text{tr}(Q_\epsilon U)\big)^t\in E(U)$, then we have $\big(\text{tr}(P_\epsilon U),\text{tr}(Q_\epsilon U)\big)^t\in\linear(P,Q;\SO_n)$. Now assume that $\big(\text{tr}(P_\epsilon U),\text{tr}(Q_\epsilon U)\big)^t\notin E(U)$. Then $\big(\text{tr}(P_\epsilon U),\text{tr}(Q_\epsilon U)\big)^t$ lies inside the ellipse $E(U)$. Since $\SO_n$ is path-connected, consider a continuous function $f:[0,1]\rightarrow \SO_n$ with $f(0)=U$ and $f(1)=U_0$ where $U_0$ is defined in Lemma~\ref{lemma6}. Since $E(f(1))$ degenerates, by continuity of $f$, there exist $s\in[0,1]$ such that  $\big(\text{tr}(P_\epsilon U),\text{tr}(Q_\epsilon U)\big)^t\in E(f(s))\subseteq \linear(P,Q;\SO_n)$. As it is true for all $U\in\SO_n$, we have $\linear(P_\epsilon ,Q_\epsilon ;\SO_n)\subseteq \linear(P,Q;\SO_n)$ and hence $(P_\epsilon ,Q_\epsilon )\in \SA(P,Q)$.
\end{proof}

Lemma~\ref{include1} remains valid if we consider $\SA_A(P,Q)$ instead of $\SA(P,Q)$. 

\begin{corollary}\label{include2}
Let $A\in\Rn$ and $n\geq 3$. For any $P,Q\in\mathbb{R}^{n\times n}$ and $0\leq \epsilon\leq 1$, we define
\[
P_\epsilon=\begin{bmatrix}\epsilon I_2 & \\ & I_{n-2}\end{bmatrix}P\quad \text{ and } \quad Q_\epsilon=\begin{bmatrix}\epsilon I_2 & \\ & I_{n-2}\end{bmatrix}Q.
\]
Then $(P_\epsilon,Q_\epsilon)\in \SA_A(P,Q)$.
\end{corollary}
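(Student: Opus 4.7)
The plan is to reduce Corollary~\ref{include2} to Lemma~\ref{include1} in a single step, by means of a change-of-variables trick that absorbs the fixed factor $UA$ into $P$ and $Q$. This avoids having to rerun the degeneration-of-ellipse argument in the $O(A)$ setting.

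First I would unpack the definition of $\SA_A(P,Q)$: it suffices to show that for every $B\in O(A)$ the point $\big(\tr(P_\epsilon B),\tr(Q_\epsilon B)\big)^t$ lies in $\linear(P,Q;O(A))$. Fixing such a $B$ and writing $B=UAV$ with $U,V\in\SO_n$ reduces the question to a statement about a pair of $\SO_n$-orbits.

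Next I would set $\tilde{P}:=PUA$ and $\tilde{Q}:=QUA$ in $\Rn$ and apply Lemma~\ref{include1} directly to the pair $(\tilde{P},\tilde{Q})$ (which is legal since $n\geq 3$). The conclusion is
\[
\linear\big(\tilde{P}_\epsilon,\tilde{Q}_\epsilon;\SO_n\big)\subseteq\linear\big(\tilde{P},\tilde{Q};\SO_n\big),
\]
where $\tilde{P}_\epsilon=D_\epsilon\tilde{P}$ with $D_\epsilon:=\epsilon I_2\oplus I_{n-2}$. Associativity of matrix multiplication gives $\tilde{P}_\epsilon=D_\epsilon(PUA)=(D_\epsilon P)UA=P_\epsilon UA$, and analogously for $Q$. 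Evaluating the left-hand side at $V$ therefore produces exactly $\big(\tr(P_\epsilon UAV),\tr(Q_\epsilon UAV)\big)^t=\big(\tr(P_\epsilon B),\tr(Q_\epsilon B)\big)^t$, and the inclusion then places this point in $\big\{\big(\tr(PUAW),\tr(QUAW)\big)^t:W\in\SO_n\big\}$, which sits inside $\linear(P,Q;O(A))$ since $UAW\in O(A)$ for every $W\in\SO_n$. As $B$ was arbitrary, this gives $(P_\epsilon,Q_\epsilon)\in\SA_A(P,Q)$.

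I do not anticipate a real obstacle; the entire proof is a one-step reduction, and the only item worth verifying carefully is the associativity identity $\tilde{P}_\epsilon=P_\epsilon UA$. If a reader prefers a self-contained derivation that does not invoke Lemma~\ref{include1} as a black box, the fallback is to mimic its proof verbatim in the $O(A)$ setting, replacing the ellipse $E(U)$ with $\{(\tr(T_\theta PUAV),\tr(T_\theta QUAV))^t:\theta\in[0,2\pi]\}\subseteq\linear(P,Q;O(A))$ (the containment uses $T_\theta U\in\SO_n$) and deforming $V$ along a path in $\SO_n$ (with $U$ fixed) to a matrix $V_0$ produced by applying Lemma~\ref{lemma6} to the transformed vectors $A^tU^tp_{(1)}^t$ and $A^tU^tp_{(2)}^t$.
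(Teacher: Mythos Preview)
Your proposal is correct and follows essentially the same approach as the paper: both reduce to Lemma~\ref{include1} by absorbing the fixed factors into $P$ and $Q$. The only cosmetic difference is that the paper sets $P'=PUAV$, $Q'=QUAV$ (absorbing $V$ as well) and evaluates the resulting inclusion at the identity, whereas you set $\tilde P=PUA$, $\tilde Q=QUA$ and evaluate at $V$; the underlying idea and the resulting $W\in\SO_n$ with $UAVW\in O(A)$ are identical.
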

\begin{proof}
For any $U,V\in\SO_n$, let $P'=PUAV$, $Q=QUAV$,
$
P'_\epsilon =(\epsilon I_2 \oplus I_{n-2})P'=P_\epsilon UAV$ and $Q'_\epsilon =(\epsilon I_2 \oplus I_{n-2})Q'=Q_\epsilon UAV .$
By Lemma~\ref{include1}, because $(P'_\epsilon,Q'_\epsilon)\in\SA(P',Q')$, there exists $W\in\SO_n$ such that 
\[
\begin{split}
\big(\tr(P_\epsilon UAV),\tr(Q_\epsilon UAV)\big)^t&=(\tr P'_\epsilon,\tr Q'_\epsilon)^t\\
& =\big(\tr(P'W),\tr (Q'W)\big)^t\\
& = \big(\tr(PUAVW),\tr(QUAVW)\big)^t\\&\in \linear(P,Q;O(A)).
\end{split}
\]
As this is true for all $U,V\in\SO_n$, we have $\linear(P_\epsilon,Q_\epsilon;O(A))\subseteq \linear(P,Q;O(A))$.
\end{proof}

Note that in Lemma~\ref{include1} and Corollary~\ref{include2}, $P_\epsilon,Q_\epsilon$ can be defined by picking arbitrary two rows of $P$ and $Q$ instead of the first two rows. We are now ready to prove our second main theorem.

\begin{proof}[Proof of Theorem~\ref{main_thm1}]
By Lemma~\ref{main_thm1_red}, it suffices to show that for all $P,Q\in\mathbb{R}^{n\times n}$, $\SA(P,Q)$ is star-shaped with respect to $(0_n,0_n)$. Let $(P',Q')\in \SA(P,Q)$ and $0\leq \alpha\leq 1$. We apply Lemma~\ref{include1} repeatedly to every two rows of $P,Q$. Then we have $(\epsilon^N P', \epsilon^N Q')\in \SA(P',Q')\subseteq \SA(P,Q)$ where $N=\frac{n!}{2(n-2)!}$. Taking $\epsilon=\sqrt[N]{\alpha}$, we have 
\[
\alpha (P',Q') = \alpha (P',Q') + (1-\alpha)(0_{n},0_{n})\in \SA(P,Q).
\] \end{proof}

For the case of $\ell=2$ and $\ell=3$, we know that $n=3$ and $n=4$ are respectively the smallest integers such that $L(O(A))$ is star-shaped for all $A\in\Rn$ and all linear maps $L:\Rn\to\R^\ell$. However, for $\ell\geq 4$, $n=2^{\ell-1}$ may not be the smallest integer to ensure star-shapedness of $L(O(A))$. One may ask the following question. 

\begin{problem}
For a given $\ell\geq 4$, what is the smallest $n$ such that $L(\SO_n)$ is star-shaped for all linear maps $L:\Rn\to\R^\ell$?
\end{problem}

The preceding results on star-shapedness of $L(O(A))$ can be easily generalized to the following joint orbits. We let $(\Rn)^m:=\{(A_1,...,A_m):A_1,...,A_m\in\Rn\}$. 

\begin{definition} For any $A_1,...,A_m\in\Rn$, we define
\[
\begin{split}
&\boldsymbol{O}_1(A_1,...,A_m;G):=\{(A_1V,...,A_mV):V\in G\},\\
&\boldsymbol{O}_2(A_1,...,A_m;G):=\{(UA_1,...,UA_m):U\in G\},\\
&\boldsymbol{O}_3(A_1,...,A_m;G):=\{(UA_1V,...,UA_mV):U,V\in G\},
\end{split}
\]
where $G=\On_n$ or $\SO_n$.
\end{definition}

\begin{theorem}\label{thm10}
Let $L:(\Rn)^m\rightarrow \mathbb{R}^\ell$ be linear, $(A_1,...,A_m)\in (\Rn)^m$ and $G=\On_n$ or $\SO_n$. If 
\begin{enumerate}
	\item[(i)] $\ell=2$ and $n\geq 3$, or
	\item[(ii)] $\ell \geq 3$ and $n\geq 2^{\ell-1}$,
\end{enumerate}
then $L(\boldsymbol{O}_i(A_1,...,A_m;G)),\; i=1,2,3$, are star-shaped with respect to the origin.
\end{theorem}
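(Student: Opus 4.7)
The plan is to reduce each of the three joint-orbit types, first for $G=\SO_n$ and then for $G=\On_n$, to the single-orbit results already established in Theorems~\ref{main_thm2} and \ref{main_thm1}. Write an arbitrary linear map $L:(\Rn)^m\to\R^\ell$ in the form $L(X_1,\ldots,X_m)=\big(\sum_{j=1}^m\tr(P_{ij}X_j)\big)_{i=1}^{\ell}$ for some $P_{ij}\in\Rn$. For $\boldsymbol{O}_1$ with $G=\SO_n$, substituting $X_j=A_jV$ and using cyclicity of trace, the $i$th coordinate collapses to $\tr(Q_iV)$ with $Q_i:=\sum_j P_{ij}A_j$; hence $L(\boldsymbol{O}_1(A_1,\ldots,A_m;\SO_n))=L'(\SO_n)=L'(O(I_n))$ for the linear map $L'(X):=(\tr(Q_1X),\ldots,\tr(Q_\ell X))^t$, and is star-shaped about the origin by Theorem~\ref{main_thm1} or \ref{main_thm2}. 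The $\boldsymbol{O}_2$ case is symmetric, taking $Q_i:=\sum_j A_jP_{ij}$.

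The $\boldsymbol{O}_3$ case with $G=\SO_n$ does not collapse to a single $\SO_n$-image in one step, but it foliates nicely. For each fixed $V\in\SO_n$, cyclicity of trace gives $\sum_j\tr(P_{ij}UA_jV)=\tr\big((\sum_j A_jVP_{ij})U\big)$, so the $V$-slice $S_V:=\{L(UA_1V,\ldots,UA_mV):U\in\SO_n\}$ is itself a linear image of $\SO_n=O(I_n)$ and is therefore star-shaped about the origin by the single-orbit theorems; in particular $0\in S_V$ for every $V$. Since $L(\boldsymbol{O}_3(A_1,\ldots,A_m;\SO_n))=\bigcup_{V\in\SO_n}S_V$ is a union of sets each of which is star-shaped about the common point $0$, the union is star-shaped about $0$.

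To pass from $\SO_n$ to $\On_n$, fix any $J\in\On_n$ with $\det J=-1$ and write $\On_n=\SO_n\cup J\cdot\SO_n$. Each joint orbit over $\On_n$ then decomposes into a finite union of $\SO_n$-orbits of modified tuples: for example $\boldsymbol{O}_1(A_1,\ldots,A_m;\On_n)=\boldsymbol{O}_1(A_1,\ldots,A_m;\SO_n)\cup\boldsymbol{O}_1(A_1J,\ldots,A_mJ;\SO_n)$, with the analogous two-piece decomposition for $\boldsymbol{O}_2$ and a four-piece decomposition for $\boldsymbol{O}_3$ indexed by $(\SO_n\cup J\SO_n)\times(\SO_n\cup J\SO_n)$. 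By the preceding cases, $L$ of each piece is star-shaped about the origin, hence so is their union. The only non-routine step is the foliation observation for $\boldsymbol{O}_3$; once that is in place, everything else is a bookkeeping reduction to the single-orbit theorems and I do not anticipate any genuine obstacle.
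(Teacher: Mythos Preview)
Your proposal is correct and follows essentially the same approach as the paper: collapse $\boldsymbol{O}_1$ and $\boldsymbol{O}_2$ to a single $\SO_n$-image via the same trace identities, handle $\boldsymbol{O}_3$ by foliating over one of the two group variables so that each slice is a linear image of $\SO_n$ (the paper fixes $U$ and varies $V$, you fix $V$ and vary $U$, which is symmetric), and then take the union of sets star-shaped about the common origin. Your explicit coset decomposition for $G=\On_n$ is exactly what the paper dismisses as ``can be derived from the case $G=\SO_n$ easily.''
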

\begin{proof}
The case of $G=\On_n$ can be derived from the case $G=\SO_n$ easily. Hence we consider the case $G=\SO_n$ only and simply denote $\boldsymbol{O}_i(A_1,...,A_m;\SO_n)$ by $\boldsymbol{O}_i(A_1,...,A_m)$. For any given $L:(\Rn)^m\rightarrow \mathbb{R}^\ell$, express it by
\[
L(X_1,...,X_m)=\left(\text{tr}\left(\sum_{i=1}^m P^{(1)}_i X_i\right),...,\text{tr}\left(\sum_{i=1}^m P^{(\ell)}_i X_i\right)\right)^t,
\]
for some $P^{(j)}_i\in \mathbb{R}^{n\times n}$, $i=1,...,m,j=1,...,\ell$. 
For $\boldsymbol{O}_1(A_1,...,A_m$) we have 
\[
\begin{split}
&~L(\boldsymbol{O}_1(A_1,...,A_m))\\=&~\left\{\left( \text{tr}\left(\sum_{i=1}^m P^{(1)}_i A_iU\right),...,\text{tr}\left(\sum_{i=1}^m P^{(\ell)}_iA_iU\right)\right)^t:U\in \SO_n \right\} \\
 =&~\linear\left(\sum_{i=1}^m P^{(1)}_iA_i,...,\sum_{i=1}^m P^{(\ell)}_iA_i;\SO_n\right).
\end{split}
\]
Similarly for $L(\boldsymbol{O}_2(A_1,...,A_m))$. Hence the star-shapedness follows from Theorem~\ref{main_thm2} and Theorem~\ref{main_thm1}. 

Now consider the case of $\boldsymbol{O}_3(A_1,...,A_m)$. For any $U,V\in\SO_n$, we have
\[\begin{split}
L(UA_1V,...,UA_mV)&=\left(\text{tr}\left(\sum_{i=1}^m P^{(1)}_i UA_iV\right),...,\text{tr}\left(\sum_{i=1}^m P^{(\ell)}_i UA_iV\right)\right)^t\\
&\in \linear \left(\sum_{i=1}^m P^{(1)}_i UA_i,...,\sum_{i=1}^m P^{(\ell)}_i UA_i;\SO_N\right).
\end{split}\]
By star-shapedness of $\linear\left(\sum_{i=1}^m P^{(1)}_i UA_i,...,\sum_{i=1}^m P^{(\ell)}_i UA_i;\SO_N\right)$, for any $0\leq \alpha\leq 1$ we have
\[\begin{split}
\alpha L(UA_1V,...,UA_mV)& \in \linear\left(\sum_{i=1}^m P^{(1)}_i UA_i,...,\sum_{i=1}^m P^{(1)}_i UA_i;\SO_N\right)^t\\ &\subseteq L(\boldsymbol{O}_3(A_1,...,A_m)).\end{split}
\] 
\end{proof}

\section{Convexity of linear image of $O(A)$}
We first give two non-convex examples, one is a linear image of $O(A)$ under $L:\Rn\to\R^\ell$ with $\ell\geq 3$ and another is a linear image of $\boldsymbol{O}_3(A_1,...,A_m)$ under $L:(\Rn)^m\to\R^\ell$ with $\ell\geq 2$.

%\begin{example}
%Consider $\boldsymbol{O}_3(I_n;\On_n)=O_2(I_n;\On_n)=O_3(I_n;\On_n)=\On_n$ with $n\geq 2$ and linear map $L:\Rn\to\R^2$ defined by 
%\[
%L(X)=(\tr(I_n X),\tr((I_{n-1}\oplus 0)X)).
%%\]
%Then $L(I_n)=(n,n-1)\in L(\On_n)$ and $L(I_{n-1}\oplus -1)=(n-2,n-1)\in L(\On_n)$. However their mid-point $(n-1,n-1)\notin L(\On_n)$. Hence $L(\On_n)$ is non-convex.
%\end{example}

\begin{example}
Consider $O(I_n)=\SO_n$ with $n\geq 2$ and the linear map $L:\Rn\to\R^\ell$ with $\ell\geq 3$ defined by
\[
L(X)=(\tr(P_1X),...,\tr(P_\ell X))^t
\]
where 
\[
P_1=I_{n-2}\oplus 0_2,\;\;\;P_2=I_{n-2}\oplus \begin{bmatrix}1&0\\0&0\end{bmatrix},\;\;\; P_3=I_{n-2}\oplus\begin{bmatrix}0&1\\0&0\end{bmatrix},
\]
and $P_j=0_n$ for $j=4,...,\ell$. The mid-point of points
$L(I_n)=(n-2,n-1,n-2,0,...,0)^t$ and $L\left(I_{n-2}\oplus \begin{bmatrix}0&-1\\1&0\end{bmatrix}\right)=(n-2,n-2,n-1,0,...,0)^t$ is in $L(P_1,...,P_\ell;\SO_n)$ only if there exists $U\in\SO_n$ having the form 
\[
U=I_{n-2}\oplus \begin{bmatrix}u_{11} & u_{12}\\u_{21} &u_{22} \end{bmatrix} 
\] with $u_{11}=\frac{1}{2}=u_{21}$. This is impossible as $u_{11}^2+u_{21}^2=1$. Hence $L(\SO_n)$ is non-convex.
\end{example}

\begin{example}
For $n\geq 3,\; m\geq 2,\; \ell\geq 2$, consider the matrices,
\[
A_1=\begin{bmatrix}1& 0& 0\\0 &0 & 0\\0&0&0\end{bmatrix}\oplus 0_{n-3}, \;\;A_2=\begin{bmatrix}0& 0& 0\\0 &1 & 0\\0&0&0\end{bmatrix}\oplus 0_{n-3},\;\;A_j=0_n,\;\;j=3,...,m,
\]
and the linear map $L:(\Rn)^m\to\R^\ell$ defined by
\[
L(X_1,...,X_m):=\big(\tr(A_1X_1+A_2X_2),\tr(A_2X_1-A_1X_2),0,...,0\big)^t.
\]
By taking $U=V=I_n$, and $U=\begin{bmatrix} 0& 1&0\\  1& 0&0 \\ 0& 0& -1\end{bmatrix}\oplus I_{n-3},$ $V=\begin{bmatrix} 0&1 &0\\  -1&0 &0 \\0 & 0& 1\end{bmatrix}\oplus I_{n-3}$ respectively, we have $(2,0,0,...,0)^t,(0,2,0,...,0)^t\in L(\boldsymbol{O}_3(A_1,...,A_m))$. We shall show that their mid-point which is $(1,1,0,...,0)^t\notin L(\boldsymbol{O}_3(A_1,...,A_m))$. For any $U=[u_{ij}],\; V=[v_{ij}]\in\SO_n$, by direct computation we have
\[
UA_1V=\begin{bmatrix}u_{11}v_{11}& \ast &\ast  \\ \ast & u_{21}v_{12}&\ast \\ \ast&\ast&\ast\end{bmatrix},\;\;\;\;UA_2V=\begin{bmatrix}u_{12}v_{13}& \ast &\ast  \\ \ast & u_{22}v_{22}&\ast \\ \ast&\ast&\ast\end{bmatrix}.
\]
Hence $(1,1,0, ...,0)\in L(\boldsymbol{O}_3(A_1,...,A_m))$ only if $u_{11}v_{11}+u_{22}v_{22}=1=u_{21}v_{12}-u_{12}v_{13}$ for some $U,V\in\SO_n$. We shall show that such $U,V$ do not exist. For $X=(x_{ij}),\; Y=(y_{ij})\in\Rn$, denote $X\circ Y:=(x_{ij}y_{ij})\in \Rn$. Since each absolute row (column) sum of $U\circ V$ is not greater than one, we have $(1,1,0,...,0)\in L(\boldsymbol{O}_3(A_1,...,A_m))$ only if there exist $U,V\in\SO_n$ such that
\[
U\circ V=\begin{bmatrix}\dfrac{1}{2} & \dfrac{1}{2} & 0 \\ \\  -\dfrac{1}{2} & \dfrac{1}{2}&0\\ \\0&0&\ast \end{bmatrix} \;\;\; \text{or} \;\;\; U\circ V=\begin{bmatrix}\dfrac{1}{2} & -\dfrac{1}{2} &0 \\ \\  \dfrac{1}{2} & \dfrac{1}{2}&0\\ \\0&0&\ast \end{bmatrix} .
\]
The possible choices of the leading $2\times 2$ principal submatices of $U$ and $V$ are
\[
\pm\frac{\sqrt{2}}{2}\begin{bmatrix}1& k_1\\ -k_2&k_1k_2\end{bmatrix} 
\]
where $k_1,k_2=\pm 1$. However, any two of them will not give the $U\circ V$ as required.
\end{example}

From the above two examples we know that $L(O(A))$ is not convex in general. However if the codomain of $L$ is $\R^2$ then $L(O(A))$ is always convex. This result was obtained by Li and Tam \cite{LT} by using techniques in Lie algebra. In the following, we shall give an alternative proof on this result by showing that $L(O(A))$ has convex boundary for all $A\in \Rn$ and linear $L:\Rn\to\R^2$, i.e., the intersection of $L(O(A))$ with any of its supporting lines is path connected. Combining with the star-shapedness property of $L(O(A))$, the convexity of $L(O(A))$ follows. We first need some notations.

\begin{definition}
For $A=(a_{ij})\in \mathbb{R}^{n\times n}$, we denote its diagonal as $d(A)=(a_{11},a_{22},...,a_{nn})^t\in\R^n$. We further denote the sum of the first $k$ diagonal elements of $A$ by $t_k(A)$. Moreover for $P\in\Rn$, we denote $r(P,A)= \max\{\tr(PUAV):{U,V\in \SO_n}\}$ and $\GA_P(A)=\{B\in O(A):\tr(PB)=r(P,A) \}$. 
\end{definition}

We shall characterize the set $\GA_P(A)$ when $A$ has distinct singular values and then show that it is path connected. Note that for any $U,V\in\SO_n$, $\GA_P(UAV)=\GA_P(A)$ and $\GA_{UPV}(A)=\{V^tBU^t:B\in \GA_P(A)\}$. Hence we may assume that $A,P$ are diagonal matrices.

\begin{lemma}\label{lem20}
Let $A=\dg(a_1,...,a_{n-1},a_n)$ where $a_1>a_2>\cdots>a_{n-1}>|a_n|\geq 0$ and $B\in O(A)$. If $t_k (B)=t_k (A)$ then 
\[
B=\begin{bmatrix}W & \\ & X_1\end{bmatrix}A\begin{bmatrix}W^t & \\ & X_2\end{bmatrix},
\]
where $W\in \SO_k$, $X_1,X_2\in \SO_{n-k}$.
\end{lemma}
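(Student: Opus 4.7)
The plan is to exhibit $t_k(\cdot)$ as a functional whose maximum over $O(A)$ equals $t_k(A)$, so that the hypothesis becomes an extremality condition from which the block structure can be read off. To begin, I would write $B=UAV$ with $U=(u_{ij}),V=(v_{ij})\in\SO_n$, and expand
\[
t_k(B)=\sum_{i=1}^k\sum_{j=1}^n u_{ij}\,a_j\,v_{ji}=\sum_{j=1}^n a_j\,c_j,\qquad c_j:=\sum_{i=1}^k u_{ij}v_{ji}.
\]
Setting $\alpha_j:=\sum_{i=1}^k u_{ij}^2$ and $\beta_j:=\sum_{i=1}^k v_{ji}^2$, two ingredients are immediately available: Cauchy--Schwarz gives $|c_j|\leq\sqrt{\alpha_j\beta_j}\leq(\alpha_j+\beta_j)/2$, and the orthonormality of the first $k$ rows of $U$ (respectively the first $k$ columns of $V$) forces $\sum_j\alpha_j=\sum_j\beta_j=k$ with each $\alpha_j,\beta_j\in[0,1]$.

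Using the strict order $|a_1|>\cdots>|a_{n-1}|>|a_n|\geq 0$, together with the one-line rearrangement bound for $\sum_j|a_j|\gamma_j$ subject to $\gamma_j\in[0,1]$ and $\sum_j\gamma_j=k$ (applied with $\gamma_j=(\alpha_j+\beta_j)/2$), these yield
\[
t_k(B)\leq\sum_{j=1}^n|a_j|\cdot\tfrac{\alpha_j+\beta_j}{2}\leq\sum_{i=1}^k|a_i|=\sum_{i=1}^k a_i=t_k(A),
\]
where the last equality uses $a_i>0$ for $i\leq k\leq n-1$ (the case $k=n$ being vacuous since $X_1,X_2$ are then empty). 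The hypothesis $t_k(B)=t_k(A)$ forces equality at every step. Strict decrease of the $|a_j|$ makes the outer rearrangement tight only when $(\alpha_j+\beta_j)/2=1$ for $j\leq k$ and $=0$ for $j>k$, which combined with $\alpha_j,\beta_j\in[0,1]$ forces $\alpha_j=\beta_j=1$ on $\{1,\dots,k\}$ and $\alpha_j=\beta_j=0$ on $\{k+1,\dots,n\}$. Cauchy--Schwarz equality together with the sign condition $a_jc_j=|a_j||c_j|$ then forces, for each $j\leq k$ (where $a_j>0$), the two unit vectors $(u_{ij})_{i\leq k}$ and $(v_{ji})_{i\leq k}$ to coincide.

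The vanishing of $\alpha_j,\beta_j$ in the complementary index ranges kills the off-diagonal blocks of $U$ and $V$, so $U=U_1\oplus U_2$ and $V=V_1\oplus V_2$ with $U_1,V_1\in\On_k$ and $U_2,V_2\in\On_{n-k}$; the column/row matching identifies $V_1=U_1^t$. With $A_1=\dg(a_1,\dots,a_k)$ and $A_2=\dg(a_{k+1},\dots,a_n)$ this yields $B=(U_1A_1U_1^t)\oplus(U_2A_2V_2)$. To upgrade each factor from $\On$ to $\SO$ I would exploit that $A_1,A_2$ commute with every sign-flip diagonal $D=\dg(\pm 1,\dots,\pm 1)$: if $\det U_1=-1$, the synchronized substitutions $U_1\mapsto U_1D$, $U_2\mapsto U_2D'$, $V_2\mapsto D'V_2$ leave both products unchanged while flipping determinants, and since $\det U=\det V=1$ forces $\det U_2=\det V_2=\det U_1$, this produces $W\in\SO_k$ and $X_1,X_2\in\SO_{n-k}$ in a single stroke. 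The point I expect to watch most carefully is the sign of $a_n$: the bound naturally carries $|a_n|$ rather than $a_n$, and this is harmless only because for $k<n$ the equality analysis already forces $\alpha_n=\beta_n=0$, so $c_n=0$ and the sign of $a_n$ never enters the equality case.
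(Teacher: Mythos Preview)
Your argument follows essentially the same route as the paper's: write $t_k(B)=\sum_j a_j c_j$ with $c_j=\sum_{i\le k}u_{ij}v_{ji}$, bound this via Cauchy--Schwarz and a rearrangement inequality, read off the block structure from the equality case, and correct determinants at the end by conjugating with sign-flip diagonals. Your explicit introduction of $\alpha_j,\beta_j$ together with the AM--GM step actually makes the inequality more transparent than the paper's terse ``since $v_{i*}u_{*i}\le 1$, $\sum v_{i*}u_{*i}\le k$'' justification.

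One point to flag: the case $k=n$ is \emph{not} vacuous. The blocks $X_1,X_2$ are indeed empty, but the conclusion then reads $B=WAW^t$ with $W\in\SO_n$, i.e.\ $V=U^t$, which is a genuine constraint on $B$. Your chain of inequalities yields only $t_n(B)\le\sum_{i}|a_i|$, and when $a_n<0$ this strictly exceeds $t_n(A)=\sum_i a_i$, so the hypothesis $t_n(B)=t_n(A)$ does not force equality throughout and the equality analysis cannot proceed. (The paper's own proof is equally loose at this point, and the lemma is only ever applied with $k<n$ in the sequel, so this is a completeness issue rather than a fatal one; the $k=n$ case can be handled separately by observing that $VU\in\SO_n$ and invoking the determinant constraint, or by appealing directly to Proposition~\ref{miranda1}.)
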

\begin{proof}
Let $B=UAV$ where $U,V\in\SO_n$ and write 
\[
U=(u_{ij})=\begin{bmatrix} U_{11} & U_{12}\\U_{21} & U_{22} \end{bmatrix}, \;\;\; V=(v_{ij})=\begin{bmatrix} V_{11} & V_{12} \\V_{21} &V_{22} \end{bmatrix}, 
\]
where $U_{11},V_{11}\in\mathbb{R}^{k\times k}$, $U_{22},V_{22}\in\mathbb{R}^{(n-k)\times (n-k)}$. Denote
\[
\begin{bmatrix} U_{11} & U_{12}\end{bmatrix}=\begin{bmatrix}u_{\ast 1} &\cdots &u_{\ast n} \end{bmatrix}, \;\;\; \begin{bmatrix} V_{11} \\ V_{21}\end{bmatrix}= \begin{bmatrix}v_{1\ast}\\ \vdots\\ v_{n \ast}\end{bmatrix},
\]
where $u_{\ast j}^t=(u_{1j},...,u_{kj}),v_{j\ast}=(v_{j1},...,v_{jk})$, $j=1,...,n$. Then $t_k (UAV)=\text{tr} (U_{11} A_{11} V_{11} + U_{12}A_{22}V_{21})=\text{tr} ( A_{11} V_{11}U_{11} + A_{22}V_{21}U_{12})=\sum_{i=1}^n a_{i}v_{i\ast}u_{\ast i}$. Since $v_{i\ast }u_{\ast i}\leq 1$, $\sum_{i=1}^n v_{i\ast}u_{\ast i}\leq k$ and $a_{1}>\cdots >a_{k}>\cdots>a_{n}$, we have $\sum_{i=1}^n a_{i}v_{i\ast}u_{\ast i}\leq \sum_{i=1}^k a_{ii}$ with equality holds if and only if $v_{i\ast}u_{\ast i}=1$ for $i\leq k$ and $v_{i\ast}u_{\ast i}=0$ for $i>k$. Hence we have $v_{i\ast}=u_{\ast i}^t$ and $u_{\ast i}u_{\ast i}^t=1$. Now $U=W\oplus X_1$ and $V=W^t\oplus X_1$ where $W\in\On_k$, $X_1,X_2\in\On_{n-k}$ and $\deter W= \deter X_1=\deter X_2.$ If $\deter W= \deter X_1=\deter X_2=-1$, then we have $B=\left((WD_1)\oplus (X_1D_2)\right)A\left((WD_1)^t\oplus (D_2X_2)\right)$ where $D_1=I_{k-1}\oplus -1$ and $D_2=-1 \oplus I_{n-k-1}$.
\end{proof}

Thompson \cite{Thompson77} gave the following result on characterizing the diagonal elements of $O(A)$.

\begin{proposition}\cite{Thompson77}\label{Thompson} A vector $d=(d_1,..., d_n)$ is the diagonal of a matrix $A\in\Rn$ with singular values $s_1\geq s_1\geq\cdots\geq s_n$ if and only if $d$ lies in the convex hull of those vectors $(\pm s_{\sigma(1)}, ..., \pm s_{\sigma(n)})$ with an even number (possibly zero) of negative signs and arbitrary permutation $\sigma$.
\end{proposition}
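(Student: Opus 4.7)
The plan is to prove $d(O(A)) = P$ (where $P$ denotes the convex hull in the statement) via a two-sided inclusion, after reducing to the case $A = \dg(s_1, \ldots, s_{n-1}, \epsilon s_n)$ with $s_1 \geq \cdots \geq s_n \geq 0$ and $\epsilon = \mathrm{sgn}(\det A) \in \{\pm 1\}$ (take $\epsilon = +1$ if $s_n = 0$). Write
\[
V(A) := \{(\epsilon_1 s_{\sigma(1)}, \ldots, \epsilon_n s_{\sigma(n)}) : \sigma \in S_n,\ \textstyle\prod_i \epsilon_i = \epsilon\},
\]
so $P = \co(V(A))$. The easy inclusion $V(A) \subseteq d(O(A))$ is immediate: each $v \in V(A)$ is the diagonal of a signed-permutation matrix whose singular values are $s_1, \ldots, s_n$ and whose determinant equals $\det A$; writing that matrix as $UAV$ with $U, V \in \SO_n$ is routine via signed permutations of appropriate determinant.

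For the harder inclusion $d(O(A)) \subseteq P$, I would argue by support functions. For every $p \in \R^n$, the aim is to verify
\[
\sup_{B \in O(A)} \langle p, d(B) \rangle \;=\; \sup_{U,V \in \SO_n} \tr(\dg(p)\, UAV) \;=\; \max_{v \in V(A)} \langle p, v \rangle.
\]
By von Neumann's trace inequality, the unconstrained maximum over $\On_n \times \On_n$ equals $\sum_i s_i |p|_i^\downarrow$, the pairing of the $s_i$ with the sorted absolute values of $p$. Restricting $\det U \cdot \det V$ to $+1$ replaces the independent sign freedom on each coordinate by the single coupling $\prod_i \epsilon_i = \epsilon$; a sign-flip on the coordinate minimizing $|p_j|\, s_{\sigma(j)}$ reconciles a mismatched parity at cost exactly $2\,|p|_n^\downarrow s_n$, and the same computation carried out directly for $\max_{v \in V(A)} \langle p, v \rangle$ produces the same value. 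Since $P$ is closed and convex and the support functions of $d(O(A))$ and $P$ agree, $d(O(A)) \subseteq P$.

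The main obstacle is that the support-function identity alone yields only $\co(d(O(A))) = P$, whereas the proposition asserts the equality of sets $d(O(A)) = P$; this requires convexity of $d(O(A))$, a real analogue of the Schur–Horn theorem. I would prove it by induction on $n$: given a target $d^\ast \in P$, use a $2 \times 2$ rotation on a principal block of $UAV$ to pin the $n$th diagonal entry to $d^\ast_n$, then read off a $(n-1) \times (n-1)$ subproblem whose required singular-value and diagonal data are again of the stated form, and invoke the inductive hypothesis. The delicate points are preserving the $\SO_n$ (rather than $\On_n$) constraint throughout the surgery, and treating the degenerate case $s_n = 0$ separately, where the sign constraint collapses and $O(A)$ has strictly smaller effective dimension.
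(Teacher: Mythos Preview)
The paper does not supply its own proof of this proposition: it is quoted verbatim as a result of Thompson \cite{Thompson77} and used as a black box in the proof of Theorem~\ref{thm22}. There is therefore no in-paper argument to compare your proposal against.

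That said, your outline is a reasonable route to the result. A few remarks. First, the support-function step you describe, namely the evaluation of $\max_{U,V\in\SO_n}\tr(\dg(p)UAV)$ with the parity correction, is precisely the content of the Miranda--Thompson formula (stated in the paper as Proposition~\ref{miranda1}); you should be aware that in the literature this formula is sometimes derived \emph{from} Thompson's diagonal theorem, so you would need an independent proof of it to avoid circularity. Second, you correctly identify the real difficulty: support functions give only $\co(d(O(A)))=P$, and the equality $d(O(A))=P$ needs the convexity of $d(O(A))$. Your inductive $2\times2$ ``pinning'' sketch is the standard Schur--Horn-type strategy and is essentially what Thompson does, but the step ``read off an $(n-1)\times(n-1)$ subproblem whose required singular-value and diagonal data are again of the stated form'' hides all the work: after fixing one diagonal entry the remaining $(n-1)\times(n-1)$ block need not have prescribed singular values, only interlacing constraints, so the induction hypothesis is not of the same shape as the theorem and must be strengthened. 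This is the genuine technical content of Thompson's paper and is not addressed by your sketch.
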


For matrices $A,B\in\Rn$, the following result by Miranda and Thompson \cite{Miranda} can be regarded as a characterization of the extreme values of $O(A)$ under the linear map $X\longmapsto \tr(BX)$.

\begin{proposition}\cite{Miranda}\label{miranda1}
Let $A,B\in\Rn$ have singular values $s_1(A)\geq \cdots\geq s_n(A)$ and $s_1(B)\geq \cdots\geq s_n(B)$ respectively. Then
\[
\max_{U,V\in \SO_n} \tr (BUAV) =\sum_{i=1}^{n-1}s_i(A)s_i(B)+(\mathrm{ sign~det}(AB))s_n(A)s_n(B).
\]
\end{proposition}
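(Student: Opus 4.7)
My approach is to reduce the problem to the diagonal case via a signed singular value decomposition, then apply Thompson's Proposition~\ref{Thompson} to evaluate a linear functional over a polytope of possible diagonals.

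First, I would invoke the fact that every $A\in\Rn$ admits a factorization $A = X_A D_A Y_A^t$ with $X_A, Y_A \in \SO_n$ and $D_A = \dg(s_1(A),\ldots,s_{n-1}(A),\epsilon_A s_n(A))$, where $\epsilon_A = \mathrm{sign\,det}(A)$; this is the ordinary SVD with a sign absorbed into the last singular value so that both orthogonal factors lie in $\SO_n$. Writing $B = X_B D_B Y_B^t$ analogously and using the cyclic property of trace,
\[
\tr(BUAV) = \tr\left(D_B\,(Y_B^t U X_A)\, D_A\, (Y_A^t V X_B)\right);
\]
the new inner factors $Y_B^t U X_A$ and $Y_A^t V X_B$ range over $\SO_n$ as $U, V$ do. Thus the problem reduces to computing $\max_{U,V\in\SO_n}\tr(D_B U D_A V)$, with the singular case handled by continuity.

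Next, expand $\tr(D_B U D_A V) = \sum_{i=1}^n (D_B)_{ii}\,(U D_A V)_{ii}$. Since $UD_AV\in O(D_A)$, Proposition~\ref{Thompson}, read with signed singular values $(s'_1,\ldots,s'_n) = (s_1(A),\ldots,s_{n-1}(A),\epsilon_A s_n(A))$, asserts that the diagonal of $UD_AV$ lies in the convex hull of the vectors $(\epsilon_1 s'_{\sigma(1)},\ldots,\epsilon_n s'_{\sigma(n)})$ over permutations $\sigma$ and signs $(\epsilon_i)\in\{\pm 1\}^n$ with an even number of $-1$'s. Since the trace expression is linear in this diagonal, its maximum is attained at a vertex of the polytope, and the problem reduces to the finite optimization
\[
\max_{\sigma,\,(\epsilon_i)}\; \sum_{i=1}^n (D_B)_{ii}\,\epsilon_i\,s'_{\sigma(i)}
\]
subject to the even-parity constraint on $(\epsilon_i)$.

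Finally, by the rearrangement inequality, the optimal permutation is $\sigma = \mathrm{id}$. Taking $\epsilon_i = +1$ for $i < n$ makes each corresponding summand equal to $s_i(A) s_i(B)$, and the parity constraint then forces $\epsilon_n = +1$; the last summand becomes $(D_B)_{nn}\cdot s'_n = \epsilon_A \epsilon_B\, s_n(A) s_n(B) = \mathrm{sign\,det}(AB)\cdot s_n(A) s_n(B)$, which yields the claimed formula. The main obstacle is verifying that no alternative sign pattern produces a larger sum; this requires a short case analysis showing that flipping any pair $(j,k)$ with $j,k<n$, or $(j,n)$ with $j<n$, strictly decreases the value, using only $s_i(A)s_i(B) \geq s_n(A)s_n(B)$ for $i<n$. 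A secondary technical point is to confirm that Proposition~\ref{Thompson}, stated for singular values $s_1 \geq \cdots \geq s_n$, is to be read with $s_n$ signed so that it characterizes the $\SO_n\times\SO_n$-orbit of $D_A$ rather than the full $\On_n\times\On_n$-orbit.
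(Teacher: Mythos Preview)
The paper does not prove this proposition; it is quoted without proof as a result of Miranda and Thompson \cite{Miranda}. So there is no ``paper's own proof'' to compare against.

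Your outline is essentially correct and, in fact, close in spirit to the original argument in \cite{Miranda}: reduce to diagonal $A$ and $B$ by a signed SVD, invoke Thompson's characterization of diagonals (Proposition~\ref{Thompson}) to turn the trace maximization into a linear program over a permutohedron-type polytope, and then identify the maximizing vertex. One point deserves tightening: the sentence ``by the rearrangement inequality, the optimal permutation is $\sigma=\mathrm{id}$'' is not literally justified, because the sequences $((D_B)_{ii})_i$ and $(s'_i)_i$ need not be similarly ordered when $\epsilon_A$ or $\epsilon_B$ equals $-1$. A cleaner way to finish is to substitute $\eta_i=\mathrm{sign}((D_B)_{ii})\,\epsilon_i\,\mathrm{sign}(s'_{\sigma(i)})$, so that the objective becomes $\sum_i s_i(B)\,\eta_i\,s_{\sigma(i)}(A)$ with the single constraint $\prod_i\eta_i=\epsilon_A\epsilon_B$; then the rearrangement inequality applies directly to the nonnegative sequences $(s_i(A))$ and $(s_i(B))$, and the parity constraint forces exactly the correction term $(\mathrm{sign}\det(AB))\,s_n(A)s_n(B)$. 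This subsumes the ``short case analysis'' you allude to and closes the only real gap in your sketch. Your reading of Proposition~\ref{Thompson} with a signed $s_n$ to capture the $\SO_n\times\SO_n$ orbit is consistent with how the paper itself uses that result in the proof of Theorem~\ref{thm22}.
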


\begin{theorem}\label{thm22}
Let $A=\dg(a_1,...,a_{n-1},\pm a_n)$ where $a_1>\cdots>a_n\geq 0$ and $P=p_1I_{n_1}\oplus\cdots\oplus p_kI_{n_k}$ where $p_1>\cdots>p_k\geq 0$ and $n_1+\cdots+n_k=n$. Then
\begin{enumerate}
\item[(i)] if $p_k>0$,
\[
\GA_P(A)=\left\{\begin{bmatrix} U_1 & & \\ & \ddots &\\ & & U_k \end{bmatrix}A \begin{bmatrix} U^t_1 & & \\ & \ddots &\\ & & U^t_k\end{bmatrix}: \begin{aligned} &U_i\in \SO_{n_i},\\ &i=1,...,k \end{aligned}\right\};
\] 
\item[(ii)] if $p_k=0$,
\[
\GA_P(A)=\left\{\arraycolsep=1.4pt\def\arraystretch{1}\left[\begin{array}{llll} U_1 & & &\\ & \ddots & &\\ & & U_{k-1} &\\ & & & U\end{array}\right]A \left[\begin{array}{cccc} U^t_1 & & & \\ & \ddots & &\\ & & U^t_{k-1}& \\ & & & V\end{array}\right]: 
\begin{aligned}
&U_i\in \SO_{n_i},\\&i=1,...,k-1,\\ &U,V\in \SO_{n_k}
\end{aligned}\right\}.
\]
\end{enumerate}
In both cases, $\GA_P(A)$ is path connected.
\end{theorem}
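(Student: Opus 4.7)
The plan is to first compute $r(P,A)$ by an Abel summation argument, then use the equality case to extract the block structure of $B \in \GA_P(A)$ by iterating Lemma~\ref{lem20}, and finally observe that path-connectedness is immediate from the resulting parametrization.

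Set $m_j = n_1 + \cdots + n_j$, so $m_0 = 0$ and $m_k = n$. For any $B \in O(A)$, Abel summation rewrites
\[
\tr(PB) = \sum_{j=1}^{k-1}(p_j - p_{j+1})\, t_{m_j}(B) + p_k\, t_{m_k}(B).
\]
Applying Proposition~\ref{miranda1} to the linear functional $X \mapsto \tr((I_{m_j}\oplus 0_{n-m_j})X)$ for $m_j < n$, and to $X \mapsto \tr X$ for $m_k = n$, yields $t_{m_j}(B) \leq t_{m_j}(A)$ for every $j$. Since $p_j - p_{j+1} > 0$ for $j < k$ and $p_k \geq 0$, this gives $\tr(PB) \leq \tr(PA)$, with equality attained at $B = A$; hence $r(P,A) = \tr(PA)$. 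For $B \in \GA_P(A)$, equality in the bound forces $t_{m_j}(B) = t_{m_j}(A)$ for each $j < k$, and additionally for $j = k$ when $p_k > 0$.

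I then extract the block structure by iterating Lemma~\ref{lem20}. From $t_{m_1}(B) = t_{m_1}(A)$, Lemma~\ref{lem20} gives $B = (W_1 \oplus Y_1) A (W_1^t \oplus Y_2)$ with $W_1 \in \SO_{n_1}$. The bottom-right block $Y_1 A^{(1)} Y_2$, where $A^{(1)} = \dg(a_{n_1+1},\ldots,\pm a_n)$, still satisfies the hypothesis of Lemma~\ref{lem20} in its smaller ambient dimension, and $t_{m_2}(B) = t_{m_2}(A)$ translates into the partial-trace equality $t_{n_2}(Y_1 A^{(1)} Y_2) = t_{n_2}(A^{(1)})$. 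Iterating through $j = 1,\ldots,k-1$ yields
\[
B = \dg(W_1,\ldots,W_{k-1},Z_1)\, A\, \dg(W_1^t,\ldots,W_{k-1}^t, Z_2),
\]
with $W_j \in \SO_{n_j}$ and $Z_1, Z_2 \in \SO_{n_k}$. In case (i), the remaining constraint $t_n(B) = t_n(A)$ reduces to $\tr(Z_1 A_k Z_2) = \tr(A_k)$, where $A_k$ is the last diagonal block of $A$; one final application of Lemma~\ref{lem20} with its ``$k$'' equal to $n_k$ then produces $W_k \in \SO_{n_k}$ with $Z_1 A_k Z_2 = W_k A_k W_k^t$, giving the stated form. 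In case (ii), $p_k = 0$ leaves $Z_1, Z_2$ unconstrained. In both cases the reverse containment is a direct computation showing $\tr(PB) = \tr(PA)$ for matrices of the stated form.

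Path-connectedness is immediate: $\GA_P(A)$ is the continuous image of $\SO_{n_1}\times\cdots\times\SO_{n_k}$ (case (i)) or $\SO_{n_1}\times\cdots\times\SO_{n_{k-1}}\times\SO_{n_k}\times\SO_{n_k}$ (case (ii)) under the parametrizations above, and each factor $\SO_{n_j}$ is path-connected. The main obstacle is the bookkeeping in the iterated application of Lemma~\ref{lem20}: at each stage one must check that the remaining sub-block still has strictly decreasing singular values (which follows from $a_1 > \cdots > a_n \geq 0$) and that the partial-trace condition on the global matrix $B$ correctly transfers to a partial-trace condition on the current sub-block.
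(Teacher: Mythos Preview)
Your proof is correct, and the overall architecture---establish the equalities $t_{m_j}(B)=t_{m_j}(A)$, then peel off blocks by iterating Lemma~\ref{lem20}, then read off path-connectedness from the product parametrization---matches the paper exactly. The genuine difference is in how you obtain the equalities $t_{m_j}(B)=t_{m_j}(A)$. The paper argues via Proposition~\ref{Thompson}: it writes $d(B)$ as a convex combination of signed permutations of $(a_1,\dots,a_n)$ and uses a swap/sign-flip argument on the extreme vectors to show that any violation of $\tr B_{11}=\tr A_1$ would contradict maximality of $\tr(PB)$. You instead use Abel summation together with Proposition~\ref{miranda1} applied to the test matrices $I_{m_j}\oplus 0_{n-m_j}$ (and $I_n$), which gives the clean inequalities $t_{m_j}(B)\le t_{m_j}(A)$ directly; strict positivity of the Abel coefficients $p_j-p_{j+1}$ then forces equality at the maximum. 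Your route is shorter and avoids the combinatorial case analysis; the paper's route stays closer to the diagonal description and does not invoke Proposition~\ref{miranda1}. One small point worth making explicit in your write-up is that your ``final application of Lemma~\ref{lem20}'' in case~(i) is the boundary case $k=n_k$ of that lemma (so $X_1,X_2$ are empty), which is legitimate since the lemma's proof goes through verbatim there.
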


\begin{proof} $(\supseteq)$ Obvious. $(\subseteq)$. We assume that $A=A_1\oplus\cdots\oplus A_k$ where $A_i\in\mathbb{R}^{n_i\times n_i}$. We have $r(P,A)=d(P)^t d(A)=\sum_{i=1}^k p_i \tr A_i$. Let $U,V\in\SO_n$ such that $\tr(PUAV)=r(P,A)=d(P)^td(UAV)$. Write 
\[
UAV=B=\begin{bmatrix}
B_{11} & B_{12} & \cdots & B_{1k}\\
B_{21} & B_{22} & \cdots & B_{2k}\\
\vdots & \cdots & \ddots & \vdots\\
B_{k1} & B_{k2} & \cdots & B_{kk}
\end{bmatrix}
\]
where $B_{ij}\in\mathbb{R}^{n_i\times n_j}$. We have $\tr(PUAV)=\tr(PB)=\sum_{i=1}^k p_i\tr B_{ii}$. We shall show that $\tr B_{ii}=\tr A_i$ for all $i$ whenever $p_i>0$. By Proposition~\ref{Thompson}, $\Dg(B)=\sum \alpha_i s_i$ where $\alpha_i> 0$, $\sum \alpha_i=1$ and $s_i$ are vector of $(\pm a_{\sigma(1)},...,\pm a_{\sigma(n)})$, $\sigma$ is a permutation on $\{1,...,n\}$ and the number of negative signs is even (odd, respectively) if $\deter A\geq 0$ ($\leq 0$, respectively). If $k=1$, then $P=p_1 I$, and the proof is trivial. Now consider $k>1$, hence $p_1>0$. We first show that $\tr B_{11}=\tr A_1$. Note that $\tr B_{11}<\tr A_1$ holds if and only if at least one of the following cases hold: 
\begin{enumerate}
\item[(1)] there exists $i_1$ such that the first $n_1$ elements of $s_{i_1}$ contain $-a_j$ where $j\leq n_1$;
\item[(2)] there exists $i_1$ such that the first $n_1$ elements of $s_{i_1}$ contain $\pm a_j$ where $j>n_1$.
\end{enumerate}
In case (1), we construct $s'_{i_1}$ from $s$ by multiplying $-1$ to $-a_j$ and arbitrary $a_q$ for some $q>n_1$. If in case (2), then there exists $i'<n_1$ such that $\pm a_{i'}$ will not be the first $n_1$ elements of $s_{i_1}$. In this case, we construct $s'_{i_1}$ from $s_{i_1}$ by interchanging $\pm a_j$ and $\pm a_{i'}$ and multiplying $-1$ to both if necessary to have $a_{i'}$ instead of $-a_{i'}$. Replace $s_{i_1}$ in $\sum \alpha_i s_i$ by $s'_{i_1}$ to form $s$. By Proposition~\ref{Thompson}, there exists $B'\in O(A)$ such that $\Dg(B')=s$. We shall have $\Dg(P)^t\Dg(B)= \Dg(P)^t(\sum \alpha_i s_i) = \Dg(P)^ts + \Dg(P)^t(s_{i_1}-s'_{i_1})<\Dg(P)^ts$, which contradicts the assumption on $B$. Therefore, we have $\tr B_{11}=\tr A_1$. By Lemma~\ref{lem20}, we have $U=U_1\oplus U_2$ and $V=V_1^t\oplus V_2$ where $U_1,V_1\in\SO_{n_1}$, $V_2,U_2\in \SO_{n-n_1}$ and $V^t_1=U_1$. Apply similar approach for $B_{ii}$ where $p_i>0$. Hence, if $p_k>0$, we have $U=U_1\oplus\cdots\oplus U_k$ and $V=U^t$ where $U_i\in \SO_{n_i}$, $i=1,...,k$; otherwise if $p_k=0$, $U=U_1\oplus\cdots\oplus U_{k-1}\oplus U'$ and $V=U^t_1\oplus\cdots\oplus U^t_{k-1}\oplus V'$ where $U_i\in \SO_{n_i}$, $i=1,...,k-1$, $U',V'\in \SO_{n_k}$. The path connectedness of $\GA_P(A)$ follows from the path connectedness of $\SO_{n_i}$ for all $i$.
\end{proof}

\begin{corollary}\label{coro22}
If $A \in\mathbb{R}^{n\times n}$ has $n$ distinct singular values, then $L(O(A))$ has convex boundary for all linear maps $L:\mathbb{R}^{n\times n}\rightarrow \mathbb{R}^2$.
\end{corollary}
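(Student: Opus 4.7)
The plan is to identify the intersection of $L(O(A))$ with any of its supporting lines as the image $L(\GA_P(A))$ of a suitable set under $L$, and then invoke Theorem~\ref{thm22} to conclude path-connectedness.

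First, write $L(X)=(\tr(P_1X),\tr(P_2X))^t$ for some $P_1,P_2\in\Rn$. Any supporting line of $L(O(A))$ has outward normal direction $(v_1,v_2)^t\neq 0$, and its intersection with $L(O(A))$ equals $\{L(B):B\in O(A),\ \tr(PB)=r(P,A)\}=L(\GA_P(A))$, where $P:=v_1P_1+v_2P_2$. Hence it suffices to show $\GA_P(A)$ is path-connected, since the continuous image of a path-connected set is path-connected.

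Next I would reduce to the diagonal case. Using the identities $\GA_P(UAV)=\GA_P(A)$ and $\GA_{UPV}(A)=\{V^tBU^t:B\in\GA_P(A)\}$ for $U,V\in\SO_n$, together with an SVD in which both factors are taken in $\SO_n$ (absorbing any sign discrepancy into the last singular value), I may assume both $A$ and $P$ are diagonal. Since $A$ has distinct singular values, it fits the hypothesis of Theorem~\ref{thm22}. When $\deter P\geq 0$, grouping equal entries of $P$ places it in the form $p_1I_{n_1}\oplus\cdots\oplus p_kI_{n_k}$ with $p_1>\cdots>p_k\geq 0$ required by Theorem~\ref{thm22}, which then yields the block-diagonal description of $\GA_P(A)$ and its path-connectedness. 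When $\deter P<0$, the diagonal form of $P$ has one negative entry $-p_n$; an analogous argument (using Proposition~\ref{miranda1} to identify $r(P,A)$, Proposition~\ref{Thompson} to pin down $d(B)$ for the maximizers $B$, and Lemma~\ref{lem20} to force the block-diagonal structure of $B$) shows that $\GA_P(A)$ is again a product of special orthogonal groups of the form $\SO_{n_i}$, and hence path-connected.

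Finally, since $L$ is continuous and $\GA_P(A)$ is path-connected, $L(\GA_P(A))$ is path-connected, so every supporting line meets $L(O(A))$ in a path-connected set, i.e., $L(O(A))$ has convex boundary.

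The main obstacle is the case $\deter P<0$, which is not literally covered by the statement of Theorem~\ref{thm22} (which assumes $p_k\geq 0$). A small extension of the proof of Theorem~\ref{thm22} is needed: one tracks how the sign factor $\mathrm{sign}(\deter(AP))$ in Proposition~\ref{miranda1} forces a specific parity on the signed permutation representing $d(B)$, and then repeats the block-by-block argument via Lemma~\ref{lem20}. Once this extension is in place, the rest of the proof is immediate.
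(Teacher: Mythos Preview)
Your argument is correct and follows essentially the paper's route: parametrize supporting lines by their normal direction, identify the intersection with $L(O(A))$ as a continuous image of $\GA_{P}(A)$ for the corresponding $P$, and cite Theorem~\ref{thm22} for path-connectedness. The paper does exactly this (using a rotation angle $\theta$ in place of your normal vector $(v_1,v_2)$), writing the intersection as $\{\tr(P'_\theta X):X\in\GA_{Q'_\theta}(A)\}$ and then invoking Theorem~\ref{thm22} in one line.

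You are in fact more scrupulous than the paper in isolating the case $\deter P<0$, which is not literally covered by the hypotheses of Theorem~\ref{thm22}; the paper's proof simply cites Theorem~\ref{thm22} without comment. Your proposed extension (rerunning the block argument with the sign tracked through Propositions~\ref{miranda1} and~\ref{Thompson} and Lemma~\ref{lem20}) works, but a one-line reduction suffices: with $J=I_{n-1}\oplus(-1)$, the map $B\mapsto JB$ is a homeomorphism from $O(A)$ onto $O(JA)$ carrying $\GA_P(A)$ onto $\GA_{PJ}(JA)$; since $PJ$ now has nonnegative diagonal and Theorem~\ref{thm22} already allows its $A$-matrix to carry either sign on the last entry, it applies directly to $\GA_{PJ}(JA)$, and path-connectedness transfers back.
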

\begin{proof}
Let $P,Q\in\mathbb{R}^{n\times n}$ be such that $\linear(P,Q;O(A))=L(O(A)).$ Then $L(O(A))$ has convex boundary if for any $\theta\in [0,2\pi]$, the set
\[
\{-\sin\theta x+ \cos\theta y:(x,y)\in \linear(P,Q;O(A)),~\cos\theta x+\sin\theta y =r_\theta\},
\]
where $r_\theta=\max \{\cos\theta x+\sin\theta y:(x,y)\in \linear(P,Q;O(A))\}$, is path connected. For any $\theta\in [0,2\pi]$, we define $P'_\theta=-\sin\theta P+ \cos\theta Q$ and $Q'_\theta=\cos\theta P+\sin\theta Q$, then we have
\[
\begin{split}
 & ~\{-\sin\theta x+ \cos\theta y:(x,y)\in \linear(P,Q;O(A)),~\cos\theta x+\sin\theta y =r_\theta\}\\
 =&~\{\tr\left(P'_\theta UAV\right) :U,V\in \SO_n,\tr\left(Q'_\theta UAV\right) =r_\theta\}\\
  =&~\{\tr(P'_\theta X) :X\in \GA_{Q'_\theta}(A)\}
\end{split}
\]
Hence by Theorem~\ref{thm22}, it is path connected. 
\end{proof}

Note that a set $M\subseteq\R^2$ is convex if and only if it is star-shaped and has convex boundary. Hence by Theorem~\ref{thm10} and Corollary~\ref{coro22}, the following result is clear.

\begin{theorem}\label{thm_distinct_sv}
Let $n\geq 3$. If $A \in\mathbb{R}^{n\times n}$ has $n$ distinct singular values, then $L(O(A))$ is convex for all linear maps $L:\mathbb{R}^{n\times n}\rightarrow \mathbb{R}^2$.
\end{theorem}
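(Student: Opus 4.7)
The plan is to combine the star-shapedness from Theorem~\ref{main_thm1} (or equivalently the $\ell=2$ case of Theorem~\ref{thm10}) with the convex boundary property from Corollary~\ref{coro22}. Since $O(A)$ is compact and $L$ is continuous, $L(O(A))$ is a compact subset of $\R^2$ that contains the origin (by the definition of star-shapedness). Let $K := \co L(O(A))$ denote its convex hull, which is compact and convex; the aim is to show $K = L(O(A))$.

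The first step is to verify $\partial K \subseteq L(O(A))$. Any supporting line $\ell$ of $K$ is also a supporting line of $L(O(A))$, and by Corollary~\ref{coro22} the intersection $L(O(A)) \cap \ell$ is path connected. A closed connected subset of a line is a closed interval, hence convex; since $K\cap\ell$ equals the convex hull of $L(O(A))\cap\ell$ and the latter is already convex, we obtain $L(O(A))\cap\ell = K\cap\ell$. Because every point of $\partial K$ lies on some supporting line of $K$, this gives $\partial K \subseteq L(O(A))$.

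The second step fills in the interior using star-shapedness. For any $q$ in the interior of $K$, the ray from the origin through $q$ exits $K$ at a point $p \in \partial K \subseteq L(O(A))$, and $q$ lies on the segment $[0,p]$. Star-shapedness of $L(O(A))$ with respect to the origin gives $[0,p] \subseteq L(O(A))$, hence $q \in L(O(A))$. Combined with $\partial K \subseteq L(O(A))$, this yields $K \subseteq L(O(A))$, and the reverse inclusion is trivial. Therefore $L(O(A)) = K$ is convex.

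There is no real obstacle here, since the two substantive inputs (star-shapedness and convex boundary) have already been established in the previous results. The only point needing attention is the upgrade from path-connectedness of $L(O(A)) \cap \ell$ to its convexity, which is immediate from the fact that a closed connected subset of a line is an interval.
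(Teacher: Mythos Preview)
Your proposal is correct and follows essentially the same route as the paper: combine the star-shapedness from Theorem~\ref{main_thm1} (equivalently the $\ell=2$ case of Theorem~\ref{thm10}) with the convex-boundary property from Corollary~\ref{coro22}. The paper simply invokes the principle that a planar set is convex if and only if it is star-shaped and has convex boundary, whereas you spell out that implication explicitly via the convex hull $K$; the substance is the same.
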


In fact, the condition of distinct singular values in Theorem~\ref{thm_distinct_sv} can be removed by applying the following lemma.

\begin{lemma}\label{lem24}
Let $L:\mathbb{R}^{n\times n}\rightarrow \mathbb{R}^\ell$ be a linear map. Suppose $L(O(A))$ is convex for all $A$ in a dense set $S$ of $\mathbb{R}^{n\times n}$. Then $L(O(A))$ is convex for all $A\in\mathbb{R}^{n\times n}$.
\end{lemma}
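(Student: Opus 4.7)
The plan is a routine compactness-and-continuity argument exploiting the fact that $\SO_n$ is compact. First I would fix an arbitrary $A \in \mathbb{R}^{n\times n}$ and, using density of $S$, choose a sequence $A_k \in S$ with $A_k \to A$. By hypothesis, $L(O(A_k))$ is convex for every $k$, and I want to transfer this convexity to $L(O(A))$.

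To check convexity of $L(O(A))$, take two arbitrary points $x, y \in L(O(A))$ and $\alpha \in [0,1]$. Write $x = L(U_1 A V_1)$ and $y = L(U_2 A V_2)$ for some $U_1, V_1, U_2, V_2 \in \SO_n$, and define the approximating points $x_k := L(U_1 A_k V_1)$ and $y_k := L(U_2 A_k V_2)$, which lie in $L(O(A_k))$ and satisfy $x_k \to x$, $y_k \to y$ by continuity of $L$ and of matrix multiplication. Convexity of $L(O(A_k))$ then gives $U_k, V_k \in \SO_n$ with $L(U_k A_k V_k) = \alpha x_k + (1-\alpha) y_k$.

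Next I would invoke compactness of $\SO_n \times \SO_n$ to pass to a convergent subsequence $(U_{k_j}, V_{k_j}) \to (U, V)$, where $U, V \in \SO_n$ since $\SO_n$ is closed. Because $A_{k_j} \to A$ and $L$ is continuous, $L(U_{k_j} A_{k_j} V_{k_j}) \to L(UAV)$. Comparing limits yields $\alpha x + (1-\alpha) y = L(UAV) \in L(O(A))$, which is exactly convexity.

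There is really no hard step here; the only point that requires any care is ensuring the limiting matrices stay in $\SO_n$ (not just $\On_n$), which is automatic from closedness of $\SO_n$ in $\mathbb{R}^{n\times n}$. The argument would apply verbatim for any codomain $\R^\ell$, so the lemma as stated (with general $\ell$) is handled in one stroke, and can then be combined with Theorem~\ref{thm_distinct_sv} (applied to the dense set of matrices with distinct singular values) to drop the distinctness hypothesis in the $\ell=2$ convexity result.
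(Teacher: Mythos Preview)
Your argument is correct. Both your proof and the paper's rest on the same underlying ingredients---density of $S$, continuity of $(U,V,A)\mapsto L(UAV)$, and compactness of $\SO_n$---but the packaging differs. You argue directly: pick a sequence $A_k\to A$ in $S$, realize the convex combination inside $L(O(A_k))$ by some $(U_k,V_k)$, and use sequential compactness of $\SO_n\times\SO_n$ to pass to a limit in $L(O(A))$. The paper instead argues by contradiction: it assumes a midpoint $y=\tfrac12(x_1+x_2)$ is missing from $L(O(A_0))$, uses compactness of the image to separate $y$ from $L(O(A_0))$ by an $\epsilon$-ball, and then exploits the uniform estimate $\|L(UA_0V)-L(UA_\epsilon V)\|\le C\|A_0-A_\epsilon\|$ to push a nearby point of $L(O(A_\epsilon))$ back into that ball. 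Your version is slightly cleaner in that it handles an arbitrary $\alpha\in[0,1]$ at once and avoids the $\epsilon$-bookkeeping; the paper's version avoids explicitly extracting subsequences but implicitly relies on midpoint convexity together with closedness of $L(O(A_0))$ to conclude full convexity. Either route is entirely routine, and your remark that $\SO_n$ is closed (so the limit stays in $\SO_n$) is exactly the only point needing a word of justification.
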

\begin{proof}
Suppose that $A_0\in\mathbb{R}^{n\times n}$ such that $L(O(A_0))$ is not convex. Then there exist $x_1,x_2\in L(O(A_0))$ such that $y=\frac{1}{2}(x_1+x_2) \notin L(O((A_0))$. Since $L(O(A_0))$ is compact, there exists $\epsilon>0$ such that $B(y,\epsilon):=\{x\in \mathbb{R}^\ell: \left\|x-y\right\|<\epsilon\}$ has empty intersection with $L(O(A_0))$. Since $S$ is dense in $\mathbb{R}^{n\times n}$, there exists $A_\epsilon\in S$ such that for all $U,V\in \SO_n$,
\[
\left\| L(UA_0V)-L(UA_\epsilon V)\right\| < \frac{\epsilon}{2}.
\]
Hence there exist $x'_1,x'_2 \in L(O(A_\epsilon))$ such that $\left\|x'_1-x_1\right\|<\frac{\epsilon}{2}$ and $\left\|x'_2-x_2\right\|<\frac{\epsilon}{2}$. By convexity of $L(O(A_\epsilon))$, $y'=\frac{1}{2}(x'_1+x'_2)\in L(O(A_\epsilon)$. We have
 \[
\left\|y'-y\right\|=\left\|\frac{1}{2}(x'_1+x'_2)-\frac{1}{2}(x_1+x_2)\right\|<\frac{1}{2}\left(\frac{\epsilon}{2}+\frac{\epsilon}{2}\right)=\frac{\epsilon}{2}.
\] By assumption of $A_\epsilon$, there exists $z\in L(O(A_0))$ such that $\left\|z-y'\right\|<\frac{\epsilon}{2}$. Then $\left\|z-y\right\|=\left\|(z-y')+(y'-y)\right\|<\left\|(z-y')\right\|+\left\|(y'-y)\right\|<\frac{\epsilon}{2}+\frac{\epsilon}{2}=\epsilon$, contradicting the fact that $B(y,\epsilon) \cap L(O(A_0))=\emptyset$. 
\end{proof}

Since the set of $n\times n$ matrices with $n$ distinct singular values is dense in $\Rn$, by Lemma~\ref{lem24} we have the following result.

\begin{theorem}
Let $n\geq 3$. $L(O(A))$ is convex for all linear maps $L:\mathbb{R}^{n\times n}\rightarrow \mathbb{R}^2$ and $A\in\mathbb{R}^{n\times n}$.
\end{theorem}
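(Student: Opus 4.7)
The plan is to combine the two preceding results essentially directly: Theorem~\ref{thm_distinct_sv} handles the case when $A$ has $n$ distinct singular values, and Lemma~\ref{lem24} is the density tool that lets us pass from a dense subset of $\Rn$ to all of $\Rn$. So the entire proof reduces to verifying one density statement, after which the conclusion is immediate.

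The first step is to let $S \subseteq \Rn$ denote the set of matrices with $n$ distinct singular values. By Theorem~\ref{thm_distinct_sv}, $L(O(A))$ is convex for every $A \in S$ and every linear $L:\Rn \to \R^2$. To apply Lemma~\ref{lem24}, I need to check that $S$ is dense in $\Rn$. The cleanest way to see this is via the singular value decomposition: write any $A \in \Rn$ as $A = U\,\dg(s_1,\ldots,s_n)\,V$ with $U,V \in \On_n$ and $s_1 \geq \cdots \geq s_n \geq 0$, and then for small $\varepsilon > 0$ perturb the diagonal to $\dg(s_1+\varepsilon_1,\ldots,s_n+\varepsilon_n)$ with $\varepsilon_i$ chosen so that the resulting values are strictly decreasing and nonnegative. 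The perturbed matrix $A_\varepsilon = U\,\dg(s_1+\varepsilon_1,\ldots,s_n+\varepsilon_n)\,V$ then lies in $S$ and converges to $A$ as $\max|\varepsilon_i| \to 0$. This gives density of $S$ in $\Rn$.

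With density in hand, Lemma~\ref{lem24} applies verbatim (with $\ell = 2$) to yield convexity of $L(O(A))$ for every $A \in \Rn$ and every linear $L:\Rn \to \R^2$, provided $n \geq 3$ (the hypothesis $n \geq 3$ is inherited from Theorem~\ref{thm_distinct_sv}, which in turn comes from Theorem~\ref{main_thm1} for the star-shapedness half).

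There is really no obstacle here; the only point that requires any thought at all is the density claim, and that is essentially a one-line SVD perturbation. All the substantial work has already been done in the preceding sections: the star-shapedness from Theorem~\ref{main_thm1}, the characterization of the boundary set $\GA_P(A)$ from Theorem~\ref{thm22}, the resulting convex boundary from Corollary~\ref{coro22}, and the density/approximation step in Lemma~\ref{lem24}.
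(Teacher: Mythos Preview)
Your proposal is correct and follows exactly the paper's own approach: the paper simply observes that the set of $n\times n$ matrices with $n$ distinct singular values is dense in $\Rn$ and then invokes Lemma~\ref{lem24} together with Theorem~\ref{thm_distinct_sv}. Your SVD perturbation argument for density is a perfectly acceptable elaboration of the one sentence the paper spends on this.
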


From the proof of Corollary~\ref{thm10}, the convexity of $L(O(A))$ can be extended to $L(\boldsymbol{O}_i(A_1,...,A_m))$, $i=1,2$.

\begin{corollary}
Let $n\geq 3$. $L(\boldsymbol{O}_i(A_1,...,A_m))$, $i=1,2$, is convex for all linear maps $L:(\R^{n\times n})^m\rightarrow \mathbb{R}^2$ and $A_1,...,A_m\in\Rn$.
\end{corollary}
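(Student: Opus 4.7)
My plan is to mirror the algebraic reduction used in the proof of Theorem~\ref{thm10}, but to invoke the $\R^2$-convexity result from the preceding theorem (in place of star-shapedness) at the final step. The key observation is that for $i=1,2$---but not $i=3$---the set $L(\boldsymbol{O}_i(A_1,\ldots,A_m))$ is already expressible as a linear image of the single orbit $O(I_n)=\SO_n$ under some linear map $L':\Rn\to\R^2$, so the theorem applies directly.

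Concretely, I would first write $L$ in coordinate form as
\[
L(X_1,\ldots,X_m)=\left(\tr\Big(\sum_{i=1}^m P^{(1)}_iX_i\Big),\;\tr\Big(\sum_{i=1}^m P^{(2)}_iX_i\Big)\right)^t
\]
for some $P^{(j)}_i\in\Rn$. For $\boldsymbol{O}_1(A_1,\ldots,A_m)=\{(A_1V,\ldots,A_mV):V\in\SO_n\}$, substituting $X_i=A_iV$ and pulling $V$ out of each trace yields
\[
L(\boldsymbol{O}_1(A_1,\ldots,A_m))=\linear\!\left(\sum_{i=1}^m P^{(1)}_iA_i,\;\sum_{i=1}^m P^{(2)}_iA_i;\;\SO_n\right),
\]
which coincides with $L'(O(I_n))$ for the linear map $L':\Rn\to\R^2$ defined by $L'(X)=(\tr(Q_1X),\tr(Q_2X))^t$ with $Q_j=\sum_i P^{(j)}_iA_i$. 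Convexity then follows immediately from the preceding theorem applied with $A=I_n$. The case $i=2$ is handled identically after using the cyclic invariance $\tr(P^{(j)}_iUA_i)=\tr(A_iP^{(j)}_iU)$ to move each $A_i$ to the left of $P^{(j)}_i$, giving the same conclusion with $Q'_j=\sum_iA_iP^{(j)}_i$.

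There is no genuine obstacle here; the whole argument is a short algebraic bookkeeping followed by a single appeal to the $\R^2$-convexity theorem, and the case $G=\On_n$ is recovered from the $G=\SO_n$ case exactly as in Theorem~\ref{thm10}. The reason this approach fails for $\boldsymbol{O}_3$---which is precisely why the corollary excludes that case---is that the two independent factors $U,V\in\SO_n$ appearing in $UA_iV$ cannot simultaneously be absorbed into a single rotation, so the image is no longer of the form $L'(O(B))$ for any fixed $B$; indeed the second non-convex example at the start of this section exhibits a genuinely non-convex instance of that shape.
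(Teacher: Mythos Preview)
Your proposal is correct and coincides with the paper's own argument: the paper simply notes that the reduction carried out in the proof of Theorem~\ref{thm10} expresses $L(\boldsymbol{O}_i(A_1,\ldots,A_m))$ for $i=1,2$ as $\linear(Q_1,Q_2;\SO_n)=L'(O(I_n))$, and then invokes the $\R^2$-convexity theorem. Your write-up spells out exactly this reduction (including the trace cyclicity for $i=2$ and the remark on why $i=3$ is excluded), so it is essentially the same proof.
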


\end{document}